\numberwithin{equation}{section}
\theoremstyle{plain}
\newtheorem{theorem}{Theorem}[section]
\newtheorem{lemma}[theorem]{Lemma}
\newtheorem{corollary}[theorem]{Corollary}
\newtheorem{conjecture}[theorem]{Conjecture}
\theoremstyle{definition}
\newtheorem{definition}[theorem]{Definition}
\theoremstyle{remark}
\newtheorem{remark}[theorem]{Remark}
\newtheorem{case[theorem]}{Case}
\title[\parbox{14cm}{\centering{Improved extension Theorems \hspace{1in}}} \quad]{Conjecture and improved extension theorems for paraboloids in the finite field setting}
\author{ Doowon Koh }
\address{Department of Mathematics\\
Chungbuk National University \\
Cheongju, Chungbuk 28644 Korea}
\email{koh131@chungbuk.ac.kr}
\thanks{Key words and phrases: Restriction theorem, extension theorem, paraboloid, finite field.\\
This research was supported by Basic Science Research Program through the National Research Foundation of Korea(NRF) funded by the Ministry of Education, Science and Technology(NRF-2015R1A1A1A05001374)
}
\subjclass[2010]{ 42B05}
\begin{document}

\begin{abstract} We study the extension estimates for paraboloids in $d$-dimensional vector spaces over finite fields $\mathbb F_q$ with $q$ elements.
We use the connection between $L^2$ based restriction estimates and $L^p\to L^r$ extension estimates for paraboloids. As a consequence, we improve the $L^2\to L^r$ extension results obtained by A. Lewko
and M. Lewko \cite{LL10}  in even dimensions $d\ge 6$ and odd dimensions $d=4\ell+3$ for $\ell \in \mathbb N.$ Our results extend the consequences for 3-D paraboloids due to M. Lewko \cite{LL13} to higher dimensions. We also clarifies conjectures on finite field extension problems for paraboloids.

\end{abstract}

\maketitle
\section{Introduction} Let $V\subset \mathbb R^d$ be a hypersurface which is endowed with a surface measure $d\sigma.$
In the Euclidean setting, the extension problem is to determine the exponents $1\le p, r\le \infty$ such that the following inequality holds:
$$ \|(fd\sigma)^\vee\|_{L^{r}(\mathbb R^d)} \le C \|f\|_{L^{p}(V, d\sigma)},$$
where the constant $C>0$ is independent of functions $f\in L^p(V, d\sigma).$ By duality, this extension estimate is same as the restriction estimate 
$$\|\widehat{g}\|_{L^{p'}(V, d\sigma)} \le C\|g\|_{L^{r'}(\mathbb R^d)}.$$
Here, $p'$ and $r'$ denote the H\"{o}lder conjugates of $p$ and $r$, respectively (i.e. $1/p + 1/p'=1$).
Therefore, the extension problem is also called the restriction problem.
In 1967, E.M. Stein \cite{St78} introduced the restriction problem. This problem had been completely solved for the parabola and the circle in two dimensions, and the cones in three and four dimensions (see \cite{Zy74, Ba85, Wo01}).
However, it is still open in other cases although  improved results have been obtained by harmonic analysts.
We refer readers to \cite{Gu15, St93,Ta03, Ta04} for further information and recent developments on the restriction problem in the Euclidean setting.\\

In 2002, Mockenhaupt and Tao \cite{MT04} initially posed and studied the extension problem for various varieties in $d$-dimensional vector spaces over finite fields.
In order to formulate a finite field analogue of the extension problem,  the real set is replaced by finite fields. We begin by reviewing the definition of the finite field extension problem.
We denote by $\mathbb F_q$ a finite field with $q$ elements.
Throughout this paper, we shall assume that $q$ is a power of odd prime. Let $\mathbb F_q^d$ be a $d$-dimensional vector space over the finite field $\mathbb F_q.$
We endow the vector space $\mathbb F_q^d$ with the counting measure $dm.$ We write $(\mathbb F_q^d, dm)$ to stress that the vector space $\mathbb F_q^d$ is endowed with the counting measure $dm.$
Since the vector space $\mathbb F_q^d$ is isomorphic to its dual space as an abstract group, we identify the space $\mathbb F_q^d$ with its dual space.
However,  a normalized counting measure $d\xi$ is endowed with its dual space which will be denoted by  $(\mathbb F_q^d, d\xi).$ We always use the variable $m$ for an element of the vector space $(\mathbb F_q^d, dm)$.
On the other hand, the variable $\xi$ will be an element of the dual space $(\mathbb F_q^d, d\xi).$ For example, we simply write $m\in \mathbb F_q^d$ and $\xi \in \mathbb F_q^d$ for $m\in (\mathbb F_q^d, dx)$ and $\xi \in (\mathbb F_q^d, d\xi)$, respectively. For a complex valued function $g: (\mathbb F_q^d, dm)\to \mathbb C$, the Fourier transform $\widehat{g}$  on $(\mathbb F_q^d, d\xi)$ is defined by
$$ \widehat{g}(\xi)=\int_{\mathbb F_q^d} g(m) \chi(-m\cdot \xi)\,dm = \sum_{m\in \mathbb F_q^d} g(m)\chi(-m\cdot \xi)$$
where $\chi$ denotes a nontrivial additive character of $\mathbb F_q$ and the dot product is defined by $m\cdot \xi=m_1\xi_1 + \cdots + m_d \xi_d$ for $m=(m_1,\ldots,m_d),\, \xi=(\xi_1,\ldots, \xi_d)\in \mathbb F_q^d.$
For a complex valued function $f:(\mathbb F_q^d, d\xi) \to \mathbb C$,  the inverse Fourier transform $f^\vee$  on $(\mathbb F_q^d, dm)$ is given by
$$ f^\vee(m)=\int_{\mathbb F_q^d} f(\xi) \chi(\xi\cdot m) \,d\xi = \frac{1}{q^d} \sum_{\xi\in \mathbb F_q^d} f(\xi) \chi(\xi\cdot m).$$
Using the orthogonality relation of the nontrivial character $\chi$ of $\mathbb F_q$, we obtain the Plancherel theorem:
$$\|\widehat{g}\|_{L^2(\mathbb F_q^d, d\xi)} = \|g\|_{L^2(\mathbb F_q^d, dm)} \quad \mbox{or}\quad \|f\|_{L^2(\mathbb F_q^d, d\xi)}=\|f^\vee\|_{L^2(\mathbb F_q^d, dm)}.$$
Namely, the Plancherel theorem yields the following equation
$$ \frac{1}{q^d} \sum_{\xi\in \mathbb F_q^d} |\widehat{g}(\xi)|^2 =\sum_{m\in \mathbb F_q^d} |g(m)|^2 \quad \mbox{or}\quad \frac{1}{q^d} \sum_{\xi\in \mathbb F_q^d} |f(\xi)|^2 =\sum_{m\in \mathbb F_q^d} |f^\vee(m)|^2. $$
Notice by the Plancherel theorem that if  $G, F\subset \mathbb F_q^d$, then we have
$$\frac{1}{q^d} \sum_{\xi\in \mathbb F_q^d} |\widehat{G}(\xi)|^2 =|G| \quad \mbox{and} \quad \sum_{m\in \mathbb F_q^d} |F^\vee(m)|^2 =\frac{|F|}{q^d},$$
where $|E|$ denotes the cardinality of a set $E\subset \mathbb F_q^d.$
Here, and throughout this paper, we shall identify the set $E\subset \mathbb F_q^d$ with the indicator function $1_E$ on the set $E.$
Namely, we shall write $\widehat{E}$ for $\widehat{1_E}$, which allows us to use a simple notation.
Given functions $g_1, g_2: (\mathbb F_q^d, dm) \to \mathbb C,$ the convolution function $g_1\ast g_2$ on $(\mathbb F_q^d, dm)$ is defined by
$$ g_1\ast g_2(n) = \int_{\mathbb F_q^d} g_1(n-m) g_2(m)\,dm = \sum_{m\in \mathbb F_q^d} g_1(n-m) g_2(m).$$
On the other hand, if $f_1, f_2: (\mathbb F_q^d, d\xi) \to \mathbb C,$ then the convolution function $f_1\ast f_2$ on $(\mathbb F_q^d, d\xi)$ is given by
$$ f_1\ast f_2(\eta)=\int_{\mathbb F_q^d} f_1(\eta-\xi) f_2(\xi)\,d\xi = \frac{1}{q^d} \sum_{\xi\in \mathbb F_q^d} f_1(\eta-\xi) f_2(\xi).$$
Then it is not hard to see that
$$ \widehat{g_1\ast g_2} = \widehat{g_1} \widehat{g_2} \quad \mbox{and}\quad (f_1\ast f_2)^\vee = f_1^\vee f_2^\vee.$$

Given an algebraic variety $V\subset (\mathbb F_q^d, d\xi)$, we endow $V$ with the normalized surface measure $d\sigma$ which is defined by the relation
$$ \int_V f(\xi)\,d\sigma(\xi) =\frac{1}{|V|} \sum_{\xi \in  V} f(\xi).$$
Notice that $d\sigma(\xi)=\frac{q^d}{|V|}\, 1_V(\xi)\, d\xi$ and  we have
$$ (fd\sigma)^\vee(m)=\int_V f(\xi) \chi(m\cdot \xi)\, d\sigma(\xi) =\frac{1}{|V|} \sum_{\xi\in V} f(\xi) \chi(m\cdot \xi).$$
For each $1\le p,r\le \infty$, we define $R^*_V(p\to r)$ as the smallest positive real number such that the following extension estimate holds:
$$ \|(fd\sigma)^\vee\|_{L^{r}(\mathbb F_q^d, dm)} \le R^*_V(p\to r) \,\|f\|_{L^{p}(V, d\sigma)} \quad \mbox{for all functions}~~f:V \to \mathbb C.$$
By duality, $R^*_V(p\to r)$ is also the smallest positive constant such that the following restriction estimate holds:
$$\|\widehat{g}\|_{L^{p'}(V, d\sigma)} \le R^*_V(p\to r) \,\|g\|_{L^{r'}(\mathbb F_q^d, dm)} \quad\mbox{for all functions}~~g:(\mathbb F_q^d, dm) \to \mathbb C.$$
The number $R^*_V(p\to r)$ may depend on $q$, the size of the underlying finite field $\mathbb F_q.$
The main question on the extension problem for $V\subset \mathbb F_q^d$ is to determine $1\le p, r\le \infty$ such that the number $R^*_V(p\to r)$ is independent of $q.$
Throughout this paper, we shall use $X\lesssim Y$ for $X, Y>0$ if there is a constant $C>0$ independent of $q=|\mathbb F_q|$ such that
$ X\le C Y.$ We also write $Y\gtrsim X$ for $X\lesssim Y,$ and $X\sim Y$ means that $X\lesssim Y$ and $Y\lesssim X.$
In addition, we shall use $X\lessapprox Y$ if for every $\varepsilon>0$ there exists $C_{\varepsilon}>0$ such that $X\lesssim C_{\varepsilon} q^{\varepsilon} Y.$
This notation is handy for suppressing powers of $\log{q}.$
Using the notation $\lesssim$, the extension problem for $V$ is to determine $1\le p,r\le \infty$ such that $R^*_V(p\to r)\lesssim 1.$ \\

Since the finite filed extension problem was addressed in 2002 by Mockenhaupt and Tao \cite{MT04}, it  has been studied for several algebraic varieties such as paraboloids, spheres, and cones (see, for example, \cite{LL13, LL10, KS12, IK10, KS13}.) In particular, very interesting results have been recovered for paraboloids.
From now on, we restrict ourselves to the study of the extension problem for the paraboloid $P\subset \mathbb (\mathbb F_q^d, d\xi)$  defined as
\begin{equation}\label{defP} P= \{\xi\in \mathbb F_q^d: \xi_d=\xi_1^2+ \cdots +\xi_{d-1}^2\}.\end{equation}
This paper is written to achieve two main goals.
One is to address  clarified conjectures on the extension problem for paraboloids. The other is to improve the previously known $L^2\to L^r$ extension estimates for paraboloids in higher dimensions.\\

In Section \ref{secII}, we shall introduce neat necessary conditions which we may conjecture as sufficient conditions for $R_P^*(p\to r)\lesssim 1.$
In particular, by Lemma \ref{GeN} in Section \ref{secII} it is natural  to conjecture the following statement on the $L^2\to L^r$ extension problem for paraboloids.

\begin{conjecture}\label{Conj1} Let $P\subset \mathbb F_q^d$ be the paraboloid defined as in \eqref{defP}. Then we have
\begin{enumerate}
\item If $d\ge 2$ is even, then  $ R_P^*(2\to r)\lesssim 1 \iff \frac{2d+4}{d}\le r\le \infty$
\item If $d=4\ell-1$ for $\ell\in \mathbb N$, and $ -1\in \mathbb F_q$ is not a square number, then we have
$$ R_P^*(2\to r)\lesssim 1 \iff \frac{2d+6}{d+1}\le r\le \infty$$
\item  If $d=4\ell+1$ for $\ell \in \mathbb N$, then $ R_P^*(2\to r)\lesssim 1 \iff \frac{2d+2}{d-1}\le r\le \infty$
\item  If $d\ge 3$ is odd, and $-1\in \mathbb F_q$ is a square number, then we have
$$R_P^*(2\to r)\lesssim 1 \iff \frac{2d+2}{d-1}\le r\le \infty.$$
\end{enumerate}
\end{conjecture}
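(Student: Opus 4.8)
The plan is to split each of the four biconditionals into its necessity half (``$\Longrightarrow$'') and its sufficiency half (``$\Longleftarrow$''), with the understanding that only the first is within reach of the tools assembled here; the second is the genuine open content of the conjecture, so what follows is a route to it rather than a completed proof. For necessity I would simply invoke Lemma~\ref{GeN}: the four lower thresholds on $r$ are exactly the necessary conditions it records. The mechanism behind them is the Knapp-type test. If $H=v+W\subset P$ is a $k$-dimensional affine subspace, then $(1_H d\sigma)^\vee$ has constant modulus $q^{k}/|P|$ on the annihilator $W^{\perp}$ (of size $q^{d-k}$) and vanishes off it, while $\|1_H\|_{L^2(d\sigma)}=(|H|/|P|)^{1/2}$, so feeding $f=1_H$ into the extension inequality forces $r\ge 2(d-k)/(d-1-k)$, which is sharpest when $k$ is maximal. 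Since $P$ is the graph of $\xi_1^2+\cdots+\xi_{d-1}^2$, an affine subspace lies in $P$ exactly when its linear part is carried to a totally isotropic subspace of $(\mathbb F_q^{d-1},\xi_1^2+\cdots+\xi_{d-1}^2)$, so the maximal admissible $k$ is the Witt index of that form: it equals $(d-2)/2$ when $d$ is even, and when $d$ is odd it equals $(d-1)/2$ if the form is hyperbolic and $(d-3)/2$ otherwise, a rank-$2\ell$ form of square discriminant being hyperbolic precisely when $(-1)^{\ell}$ is a square in $\mathbb F_q$. With $\ell=(d-1)/2$ this trichotomy is exactly the case split $d\equiv 0$, $d\equiv 1\pmod 4$, and $d\equiv 3\pmod 4$ with $-1$ a (non)square; substituting each Witt index into $r\ge 2(d-k)/(d-1-k)$ returns the thresholds in (1)--(4), and one checks that the alternative test $f=1_P$ (which yields only $r\ge 2d/(d-1)$) never beats them.

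For the sufficiency direction the first step is to reduce to the endpoint: since $\|h\|_{L^{r}(\mathbb F_q^d,dm)}$ is nonincreasing in $r$ for the counting measure, and since $\|(fd\sigma)^\vee\|_{L^\infty}\le\|f\|_{L^1(d\sigma)}\le\|f\|_{L^2(d\sigma)}$ holds trivially, it suffices in each case to prove $R_P^*(2\to r_0)\lesssim 1$ at the conjectured critical exponent $r_0$; the whole range $[r_0,\infty]$ follows at once. To reach $r_0$ I would use the connection between $L^2$-based restriction and $L^p\to L^r$ extension estimates emphasized in the abstract, in the spirit of \cite{MT04,LL13}: pass to the dual $L^2$-restriction form $\|\widehat g\|_{L^2(d\sigma)}\lesssim\|g\|_{L^{r_0'}}$, run a $TT^*$ expansion to turn the problem into a convolution estimate for the kernel $(d\sigma)^\vee$, and exploit that its transform $\widehat{1_P}$ is an explicit Gauss sum --- it is through the modulus and sign of that Gauss sum (hence through $d\bmod 4$ and the quadratic character of $-1$) that the parity enters. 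When $r_0=2k$ is an even integer I would instead run the direct route $\|(fd\sigma)^\vee\|_{L^{2k}}^{2k}=\|((fd\sigma)^\vee)^{k}\|_{L^2}^2$, apply Plancherel, and reduce to a sharp bound on the $k$-th additive energy of $P$, i.e.\ on the number of solutions of $\xi^1+\cdots+\xi^k=\eta^1+\cdots+\eta^k$ with all points on $P$, in which the main term is precisely the contribution of the maximal totally isotropic subspace found above. Either way the matter comes down to one sharp convolution/additive-energy inequality at the critical exponent.

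That critical inequality is where I expect the argument to stall, and it is the actual obstacle: it is available for the parabola ($d=2$) and in a handful of low-dimensional odd cases, but in even dimensions $d\ge 6$ and in dimensions $d=4\ell+3$ with $-1$ a non-square the known estimates either carry a $q^{\varepsilon}$ loss or simply do not reach the conjectured threshold. Removing that loss --- equivalently, establishing the sharp additive energy of $P$ at the endpoint with no $\varepsilon$-room --- is the hard part, and the present paper only narrows rather than closes this gap; the full biconditionals in Conjecture~\ref{Conj1} therefore remain conjectural.
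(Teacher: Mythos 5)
Your proposal is correct and follows essentially the same route as the paper: the ``$\Longrightarrow$'' direction is exactly Lemma~\ref{GeN} (the Knapp test over the maximal isotropic subspace from Lemma~\ref{Vi}, whose Witt-index trichotomy you reproduce correctly, yielding $r\ge 2(d-k)/(d-1-k)$ with the four stated thresholds), and the ``$\Longleftarrow$'' direction reduces by the monotonicity $R^*_P(2\to r_1)\ge R^*_P(2\to r_2)$ for $r_1\le r_2$ to the endpoint Conjecture~\ref{Conj2}, which remains open in cases (1) for $d\ge 4$ and (2). Since the statement is a conjecture, your honest identification of the sufficiency half as the unproved content matches the paper's own treatment.
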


In the conclusions of Conjecture \ref{Conj1},  the statements for $``\Longrightarrow"$ direction follow immediately from Lemma \ref{GeN} in the following section.
Hence,  Conjecture \ref{Conj1} can be reduced to the following critical endpoint estimate,
 because $R^*_P(2\to r_1) \ge R^*_P(2\to r_2)$ for $1\le r_1\le r_2 \le \infty.$

\newpage
\begin{conjecture}\label{Conj2}
Let $P\subset \mathbb F_q^d$ be the paraboloid defined as in \eqref{defP}. Then we have
\begin{enumerate}
\item If $d\ge 2$ is even, then  $ R_P^*\left(2\to\frac{2d+4}{d} \right)\lesssim 1$
\item If $d=4\ell-1$ for $\ell\in \mathbb N$, and $ -1\in \mathbb F_q$ is not a square number, then
$ R_P^*\left(2\to \frac{2d+6}{d+1}\right)\lesssim 1$
\item  If $d=4\ell+1$ for $\ell \in \mathbb N$, then $ R_P^*\left(2\to \frac{2d+2}{d-1}\right) \lesssim 1$
\item  If $d\ge 3$ is odd, and $-1\in \mathbb F_q$ is a square number, then
$R_P^*\left(2\to \frac{2d+2}{d-1}\right)\lesssim 1.$
\end{enumerate}
\end{conjecture}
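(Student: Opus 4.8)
The plan is to fix one of the four critical exponents, call it $r_0$, and to prove the restricted strong‑type estimate $\|(1_Fd\sigma)^\vee\|_{L^{r_0}(\mathbb F_q^d,\,dm)}\lesssim(|F|/|P|)^{1/2}$ for every $F\subset P$; a dyadic decomposition of a general $f$ in its level sets, together with interpolation against the trivial bound $\|(fd\sigma)^\vee\|_{L^\infty}\le\|f\|_{L^1(d\sigma)}\le\|f\|_{L^2(d\sigma)}$, then gives $R^*_P(2\to r_0)\lesssim 1$. In the cases where $r_0$ is an even integer the argument below applies to arbitrary $f$ directly, so this reduction is not needed; it is in the other cases that the endpoint becomes delicate, which is already a symptom of the real difficulty.

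Consider first the cases in which $r_0=2k$ is an even integer, which for the paraboloid happens only when $d=2$ (case (1), $r_0=4$) or $d=3$ (case (4), $r_0=4$). Here orthogonality of $\chi$ turns the $2k$‑th moment into a counting identity: for any $g\colon P\to\mathbb C$,
$$\|(g\,d\sigma)^\vee\|_{L^{2k}(\mathbb F_q^d,\,dm)}^{2k}=\frac{q^d}{|P|^{2k}}\sum_{\substack{\xi^1+\cdots+\xi^k=\eta^1+\cdots+\eta^k\\ \xi^i,\,\eta^j\in P}}g(\xi^1)\cdots g(\xi^k)\,\overline{g(\eta^1)}\cdots\overline{g(\eta^k)},$$
and two applications of Cauchy–Schwarz bound the right‑hand side by $\frac{q^d}{|P|^{2k}}\bigl(\sup_x N_k(x)\bigr)\bigl(\sum_{\xi\in P}|g(\xi)|^2\bigr)^k$, where $N_k(x)=\#\{(\xi^1,\dots,\xi^k)\in P^k:\xi^1+\cdots+\xi^k=x\}$. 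Everything then reduces to bounding $N_k(x)$ uniformly in $x$: parametrising $P$ by its first $d-1$ coordinates and eliminating one point, $N_k(x)$ is the number of $\mathbb F_q$‑points of a fixed quadric in $(d-1)(k-1)$ variables whose quadratic part is not identically zero (here one uses that $q$ is odd). A standard Schwartz–Zippel‑type point count gives $N_k(x)\lesssim q^{(d-1)(k-1)-1}$, and inserting this, together with $\|g\|_{L^2(d\sigma)}^2=\frac{1}{|P|}\sum_{\xi\in P}|g(\xi)|^2$, yields exactly $R^*_P(2\to 2k)\lesssim 1$ for $d=2$ and $d=3$.

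The genuinely hard part is every other dimension, where $r_0$ is not an even integer — in the even‑dimensional case $r_0=\frac{2d+4}{d}$ is an even integer only for $d=2$ — so that no moment identity is available; this is why the statement is posed as a conjecture rather than a theorem. Here the plan is to exploit the connection established in Section \ref{secII} between $L^2$‑based restriction and $L^p\to L^r$ extension: a sufficiently sharp $L^p\to L^r$ extension estimate with $p$ slightly larger than $2$ should convert into the desired $L^2\to L^{r_0}$ endpoint, thereby improving the previously known $L^2\to L^r$ extension range (in cases (1) and (2) this means dropping below the Stein–Tomas value $\frac{2d+2}{d-1}$) toward the conjectured threshold. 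The main obstacle is producing that auxiliary estimate with sharp exponents: after the usual reductions this amounts to controlling the restricted additive energy $\Lambda_k(F)=\#\{(\xi^i,\eta^j)\in F^{2k}:\sum_i\xi^i=\sum_j\eta^j\}$ for \emph{arbitrary} large subsets $F$ of $P$, equivalently an incidence bound between points of $P$ and affine subspaces. The extremal configurations are precisely the affine subspaces contained in $P$, whose maximal dimension is governed by whether $-1$ is a square and by the residue of $d$ modulo $4$ — that is, by exactly the four cases of Conjecture \ref{Conj2} — and these are the same configurations responsible for the necessary conditions of Lemma \ref{GeN}. The sum–product and incidence machinery of M.\ Lewko \cite{LL13} controls these energies in dimension three, and one can transplant it to even dimensions $d\ge6$ and to odd dimensions $d=4\ell+3$ to improve on $\frac{2d+2}{d-1}$, but it stops short of the conjectured endpoint; closing the remaining gap appears to require a genuinely new estimate for these restricted energies, and that — rather than any soft step — is the crux.
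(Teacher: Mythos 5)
This statement is a conjecture: the paper offers no proof of it, only a survey of which parts are known, so you are right not to claim a complete argument, and your identification of parts (1) (for even $d\ge 4$) and (2) as genuinely open matches the paper. Your even-moment computation for $d=2$ in case (1) and $d=3$ in case (4) (where $r_0=4$) is essentially the Mockenhaupt--Tao counting argument and is fine.

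There is, however, a concrete gap in your assessment of what is provable. You assert that outside the even-integer cases ``no moment identity is available; this is why the statement is posed as a conjecture,'' and accordingly you leave parts (3) and (4) for $d\ge 5$ among the hard open cases. That is wrong: the paper points out that Mockenhaupt and Tao proved $R^*_P\left(2\to \frac{2d+2}{d-1}\right)\lesssim 1$ for \emph{all} $d\ge 2$ by the Stein--Tomas argument, which settles (3) and (4) in full. That argument does not need $r_0$ to be an even integer; it is the $TT^*$/interpolation scheme driven by the pointwise decay $|(d\sigma)^\vee(m)|\le q^{-(d-1)/2}$ for $m_d\ne 0$ (Lemma \ref{explicit}) together with the trivial $L^1\to L^\infty$ bound for the Bochner--Riesz kernel $K=(d\sigma)^\vee-\delta_0$. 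So the correct dichotomy is not ``even integer $r$ versus not'' but ``Stein--Tomas exponent versus better-than-Stein--Tomas'': only (1) for even $d\ge 4$ and (2) ask for something beyond $\frac{2d+2}{d-1}$, and those are the parts where your discussion of additive energies of arbitrary subsets of $P$ and of the extremal affine subspaces (Lemma \ref{Vi}, Corollary \ref{SubP}) correctly locates the difficulty. A secondary issue: your opening reduction from restricted strong type to strong type at the endpoint via dyadic decomposition of level sets loses a power of $\log q$ (or an $\varepsilon$ in the exponent); recovering the endpoint is exactly the issue Lewko--Lewko address with a bilinear argument, so as written your scheme would only yield $R^*_P(2\to r_0)\lessapprox 1$ even where the input estimates are available.
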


\subsection{Statement of main results}
By the Stein-Tomas argument, Mockenhaupt and Tao \cite{MT04} already showed that  the statements $(3), (4)$ in Conjecture \ref{Conj2} are true.
In fact, they proved that $R_P^*(2 \to (2d+2)/(d-1)) \lesssim 1 $ for all dimensions $d\ge 2$ without further assumptions.\\

The statements $(1), (2)$  in Conjecture \ref{Conj2} are very interesting in that the conjectured results are better than the Stein-Tomas inequality which is sharp in the Euclidean case.
This is due to number theoretic issue which we can enjoy when we study harmonic analysis in finite fields.
In dimension two, the statement $(1)$ in Conjecture \ref{Conj2} was already proved by Mockenhaupt and Tao \cite{MT04}, but it is open in higher even dimensions.
For higher even dimensions $d\ge 4,$ Iosevich and Koh \cite{IK09} proved that $R^*_P(2\to 2d^2/(d^2-2d+2))\lessapprox 1$ which improves the Stein-Tomas inequality due to Mockenhaupt and Tao.
This result was obtained by using a connection between $L^p\to L^4$ extension results and $L^2\to L^r$ extension estimates.
In \cite {LL10}, A. Lewko and M. Lewko improved the result of Iosevich and Koh by recovering the endpoint.
They adapted the bilinear approach to derive the improved result, $R^*_P(2\to 2d^2/(d^2-2d+2))\lesssim 1.$
In this paper, we shall obtain further improvement in higher even dimensions $d\ge 6.$
Our first main result is as follows.

\begin{theorem}\label{main1} Let $P\subset \mathbb F_q^d$ be the paraboloid defined as in \eqref{defP}.
If the dimension $d\ge 6$ is even, then for each $\varepsilon >0$ we have
$$ R_P^*\left( 2 \to  \frac{6d+8}{3d-2} +\varepsilon\right) \lesssim 1.$$
\end{theorem}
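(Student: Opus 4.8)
The plan is to run a refined Stein--Tomas type argument in which the usual $L^{2}$ input is replaced by a sharper one coming from an $L^{p}\to L^{4}$ extension bound available only in even dimensions. By duality $R^{*}_{P}(2\to r)$ is the norm of the restriction operator $g\mapsto\widehat{g}|_{P}$ from $L^{r'}(\mathbb F_q^d,dm)$ to $L^{2}(P,d\sigma)$, and a standard real--interpolation reduction against the trivial bound $R^{*}_{P}(2\to\infty)\le1$ (which is just $\|(fd\sigma)^{\vee}\|_{\infty}\le\|f\|_{L^{1}(d\sigma)}\le\|f\|_{L^{2}(d\sigma)}$) reduces Theorem~\ref{main1} to the restricted strong--type estimate
$$\big\|(1_{E}\,d\sigma)^{\vee}\big\|_{L^{r_{0}}(\mathbb F_q^d,dm)}\;\lessapprox\;\Big(\tfrac{|E|}{|P|}\Big)^{1/2}\qquad(E\subseteq P),\qquad r_{0}=\tfrac{6d+8}{3d-2}.$$
The factor $q^{\varepsilon}$ that this reduction and the dyadic pigeonholing below produce is exactly what the ``$+\varepsilon$'' absorbs: for every $r>r_{0}$ the relevant dyadic sum is an honest geometric series, so it sums to $O_{\varepsilon}(1)$ rather than to $O(\log q)$, giving $R^{*}_{P}(2\to r_{0}+\varepsilon)\lesssim1$.

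To prove the displayed estimate I would split $(1_{E}d\sigma)^{\vee}$ into its dyadic level sets $D_{\lambda}=\{m:|(1_{E}d\sigma)^{\vee}(m)|\sim\lambda\}$, reducing to $\lambda\,|D_{\lambda}|^{1/r_{0}}\lessapprox(|E|/|P|)^{1/2}$ for each of the $O(\log q)$ dyadic values $\lambda$, and then play several upper bounds for $|D_{\lambda}|$ off against each other: the Plancherel bound $\lambda^{2}|D_{\lambda}|\le\|(1_{E}d\sigma)^{\vee}\|_{2}^{2}=q^{d}|E|/|P|^{2}$; the Cauchy--Schwarz bound $\lambda|D_{\lambda}|\le\|(1_{E}d\sigma)^{\vee}\|_{1}\le q^{d}|E|^{1/2}/|P|$; the trivial bounds $|D_{\lambda}|\le q^{d}$ and $\lambda\le|E|/|P|$; the Mockenhaupt--Tao Stein--Tomas bound $\lambda^{(2d+2)/(d-1)}|D_{\lambda}|\lesssim(|E|/|P|)^{(d+1)/(d-1)}$ (which, together with the observation that $(1_Ed\sigma)^{\vee}$ cannot have many large values, takes care of the regime where $E$ nearly fills $P$); and above all the fourth--moment bound
$$\lambda^{4}|D_{\lambda}|\;\le\;\big\|(1_{E}d\sigma)^{\vee}\big\|_{4}^{4}\;=\;\big\|1_{E}d\sigma * 1_{E}d\sigma\big\|_{L^{2}(\mathbb F_q^d,d\xi)}^{2}\;=\;\frac{q^{d}}{|P|^{4}}\,\mathcal{E}(E),\qquad \mathcal{E}(E):=\#\{\xi_{1}+\xi_{2}=\xi_{3}+\xi_{4}:\xi_{i}\in E\}.$$
The identity $\|(1_{E}d\sigma)^{\vee}\|_{4}^{2}=\|1_{E}d\sigma * 1_{E}d\sigma\|_{L^{2}}$ is precisely the ``$L^{2}$--based'' reformulation of the $L^{p}\to L^{4}$ extension estimate mentioned in the introduction: Plancherel converts the $L^{4}$ extension inequality into the arithmetic task of bounding the additive energy $\mathcal{E}(E)$ of a subset of the paraboloid.

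Everything then comes down to $\mathcal{E}(E)$. Writing $\xi_{i}=(\underline{a}_{i},|\underline{a}_{i}|^{2})$, the relation $\xi_{1}+\xi_{2}=\xi_{3}+\xi_{4}$ is equivalent to $\underline{a}_{4}=\underline{a}_{1}+\underline{a}_{2}-\underline{a}_{3}$ together with the single scalar equation $(\underline{a}_{3}-\underline{a}_{2})\cdot(\underline{a}_{1}-\underline{a}_{3})=0$, so $\mathcal{E}(E)$ is at most the number of triples from the projection of $E$ satisfying one orthogonality relation. A Gauss--sum and Cauchy--Schwarz computation gives the elementary bound $\mathcal{E}(E)\lesssim |E|^{3}/q+|E|^{2}q^{(d-1)/2}$, which already disposes of very small and very large $E$; for $E$ of intermediate size, however, a strictly sharper bound is needed, and this is where the parity of $d$ enters. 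When $d$ is even the form $\xi_{1}^{2}+\cdots+\xi_{d-1}^{2}$ has Witt index only $(d-2)/2$, so $P$ contains no affine subspace of dimension exceeding $(d-2)/2$ --- and hence little additive structure --- while the Gauss sums in $(d\sigma)^{\vee}$ pick up the cancellation $\eta(m_{d})^{\,d-1}=\eta(m_{d})=\pm 1$ ($\eta$ the quadratic character); turning these two facts into an incidence estimate for points against the maximal isotropic flats, the higher--dimensional analogue of the point--line (Kakeya) input used by M.~Lewko in $\mathbb F_q^{3}$~\cite{LL13}, should yield a bound of the shape $\mathcal{E}(E)\lesssim|E|^{2}q^{(3d-4)/6}$ in the critical range of $|E|$. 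Inserting this into the level--set optimisation --- for each $\lambda$ one keeps the best of the available bounds, and the extremal $\lambda$ is where the fourth--moment bound meets the Plancherel bound --- yields exactly $r_{0}=\frac{6d+8}{3d-2}$; this is where the improvement over the Lewko--Lewko exponent $2d^{2}/(d^{2}-2d+2)$ originates, and it degenerates to that exponent when $d=4$.

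The main obstacle is the intermediate--range energy estimate $\mathcal{E}(E)\lesssim|E|^{2}q^{(3d-4)/6}$: the naive fourth--moment bound $\mathcal{E}(E)\lesssim q^{d-2}|E|^{2}$ only gives the exponent $4$, and beating it genuinely requires the even--dimensional incidence geometry (equivalently, a sharp $L^{2}$ restriction estimate for $P$ tested against the indicator of a level set). With that estimate available, what remains --- the dyadic bookkeeping, checking that the $m=0$ term, the endpoint $\lambda\sim|E|/|P|$, and the nearly--filled regime are each dominated by the Stein--Tomas and Plancherel contributions, and the $q^{\varepsilon}$ absorption explained above --- is routine.
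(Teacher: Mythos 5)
Your reduction and level--set bookkeeping are internally consistent, but the argument hinges on the intermediate--range energy estimate $\Lambda(E)\lesssim |E|^{2}q^{(3d-4)/6}$ for $E\subseteq P$, and that estimate is false, not merely unproved. Your own interpolation between the Plancherel bound and the fourth--moment bound shows this is exactly what the exponent $r_0=\frac{6d+8}{3d-2}$ forces, and it is needed for essentially every $|E|\gtrsim q^{(3d-4)/6}$ (the Stein--Tomas input only covers $\lambda\gtrsim(|E|/|P|)^{1/2}$ and Plancherel only $|E|\lesssim q^{(3d-4)/6}$, leaving a genuine gap of $\lambda$'s). But take $H\le\mathbb F_q^{d-1}$ a subspace of dimension $\frac{d+2}{2}$ and $E=\{(\underline a,\underline a\cdot\underline a):\underline a\in H\}\subset P$: writing $\underline a_4=\underline a_1+\underline a_2-\underline a_3$, the energy equation reduces to $(\underline a_1-\underline a_3)\cdot(\underline a_2-\underline a_3)=0$ with all three points ranging over $H$, giving $\Lambda(E)\ge |E|^3/q=q^{(9d+12)/6}$, whereas your bound demands $q^{(9d+8)/6}$. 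More generally the term $q^{-1}|E|^3$ in the Iosevich--Koh estimate is unavoidable, and it already exceeds $|E|^2q^{(3d-4)/6}$ once $|E|>q^{(3d+2)/6}$. With the energy bounds that are actually available ($\Lambda(E)\lesssim q^{-1}|E|^3+q^{(d-2)/4}|E|^{5/2}+q^{(d-2)/2}|E|^2$), your scheme reproduces only the Lewko--Lewko exponent $2d^2/(d^2-2d+2)$; it cannot reach $r_0$.

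The missing idea is that the improvement does not come from a stronger energy estimate but from where the $L^4$/energy input is applied. The paper works on the restriction side with (regular) test functions $g$ supported on $G\subset\mathbb F_q^d$, writes $\|\widehat g\|_{L^2(P,d\sigma)}^2=\langle g,g\ast\delta_0\rangle+\langle g,g\ast K\rangle$ with $K=(d\sigma)^\vee-\delta_0$, and slices $g$ along the hyperplanes $m_d=a$; each slice $g_a\ast K$ is, after a change of variables, the extension operator applied to a function $h_a$ supported on a subset of $P$ of size $|G_a|=|G|/|L_G|$ (Lemma \ref{key1}). The Iosevich--Koh energy bound is then applied only to these much smaller fiber sets, and the loss from summing over the $|L_G|\le q$ fibers is more than compensated by the gain $|H_a|^{5/2}$ versus $|E|^{5/2}$; combining this with the large--support bound $\|\widehat g\|_{L^2}\le q^{1/2}|G|^{1/2}$ and the small--support bound from the explicit formula for $(d\sigma)^\vee$ yields the three--case analysis that produces $\frac{6d+8}{3d-2}$. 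Without this slicing (or some genuinely new incidence input), the direct moment method you describe cannot close.
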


Notice that if $d\ge 6$, then $(6d+8)/(3d-2) <2d^2/(d^2-2d+2),$ which implies that Theorem \ref{main1} is better than the result $R_P^*(2\to 2d^2/(d^2-2d+2))\lesssim 1$ due to A. Lewko and M. Lewko.\\

The statement $(2)$ in Conjecture \ref{Conj2} has not been solved in any case.
In the case when $d=3$ and $q$ is a prime with $q\equiv 3 \,(\mbox{mod}~4)$, Mockenhaupt and Tao  \cite{MT04} deduced the following extension result: for every $\varepsilon >0$,
\begin{equation} \label{Ta3D} R_P^*\left(2 \to \frac{18}{5}+\varepsilon\right)\lesssim 1.\end{equation}
This was improved to $R_P^*(2 \to \frac{18}{5}) \lesssim 1$ by A. Lewko and M. Lewko \cite{LL10} (Bennett, Carbery, Garrigos, and Wright independently proved it in unpublished work).
Recently, Lewko \cite{LL13} discovered a nice connection between the finite field extension problem and the finite field Szemer\'{e}di-Trotter incidence problem.
Using the connection with ingenious arguments, he obtained the currently best known result on extension problems for the 3-d paraboloid. More precisely, he proved that
if the dimension $d$ is three and $-1\in \mathbb F_q$ is not a square, then there exists an $\varepsilon>0$ such that
\begin{equation}\label{Lew3DG} R_P^*\left(2\to \frac{18}{5}-\varepsilon\right)\lesssim 1.\end{equation}
Furthermore, assuming that $q$ is a prime and $-1\in \mathbb F_q$ is not a square, he gave the following explicit result for $d=3$:
\begin{equation} \label{Lew3D} R_P^*\left(2\to \frac{18}{5}-\frac{1}{1035}+\varepsilon\right) \lesssim 1 \quad \mbox{for any}\quad \varepsilon>0.\end{equation}
Although this result is still far from the conjectured result, $R^*_P(2\to 3)\lesssim 1,$  M. Lewko provided novel ideas useful in developing the finite field extension problem
and we will also adapt many of his methods to deduce our improved results.
In specific higher odd dimensions, Iosevich and Koh \cite{IK09} proved that $R_P^*(2\to \frac{2d^2}{d^2-2d+2}) \lessapprox 1$ with the assumptions of the statement $(2)$ in Conjecture \ref{Conj2}.
This result is also better than the Stein-Tomas inequality. A. Lewko and M. Lewko \cite{LL10} obtained the endpoint estimate so that the result by Iosevich and Koh was  improved to
\begin{equation} \label{LewR} R_P^*\left(2\to \frac{2d^2}{d^2-2d+2}\right) \lesssim 1.\end{equation}
As our second result, we shall improve this result in the case when $d=4\ell-1 \ge 7$ for $\ell \in \mathbb N.$
More precisely, we  have the following result.

\begin{theorem}\label{main2} Let $P \subset \mathbb F_q^d$ be the paraboloid defined as in \eqref{defP}.
If $d=4\ell+3$ for $\ell\in \mathbb N$, and $ -1\in \mathbb F_q$ is not a square number, then for every $\varepsilon >0$, we have
$$ R_P^*\left(2\to \frac{6d+10}{3d-1} +\varepsilon \right)\lesssim 1.$$
\end{theorem}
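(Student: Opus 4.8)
The plan is to run the odd-dimensional analogue of the machine behind Theorem~\ref{main1}, with the non-squareness of $-1$ (together with $d\equiv 3\pmod 4$) playing the role that the parity of $d-1$ plays in the even case. Two ingredients are combined. The first is the connection, in the style of \cite{IK09} and the general lemmas of the preceding sections, between an auxiliary $L^p\to L^4$ extension estimate for $P$ and an $L^2\to L^r$ extension estimate: decomposing a general $f$ on $P$ into level sets, interpolating the $L^p\to L^4$ bound against the trivial $L^1\to L^\infty$ bound, and using the Plancherel identity $\|(fd\sigma)^\vee\|_{L^2}=q^{1/2}\|f\|_{L^2(d\sigma)}$, one converts a sufficiently strong $L^p\to L^4$ estimate into $R_P^*(2\to r)\lesssim 1$ for an explicit $r=r(p,d)$ lying below the Stein--Tomas exponent $\tfrac{2d+2}{d-1}$. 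The second ingredient is the improved $L^p\to L^4$ estimate itself, which in our dimensions is strictly better than what holds in general.

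The source of the improvement is arithmetic. Since $d-1=4\ell+2$ and $-1$ is not a square, the form $\xi_1^2+\cdots+\xi_{d-1}^2$ on $\mathbb F_q^{d-1}$ is non-split, of Witt index $(d-3)/2$; hence $P$ contains no affine subspace of dimension larger than $(d-3)/2$ (an affine $k$-flat inside $P$ projects onto a totally isotropic $k$-subspace of that form), and --- what is used most directly in the $L^4$ estimate --- the two-fold representation function $N'(\eta)=\#\{(a,b)\in P\times P:\,a+b=\eta\}$ satisfies $N'(\eta)\lesssim q^{d-2}$ \emph{uniformly} in $\eta$, with no spike on the null-sphere locus; when $-1$ is a square the analogous count jumps by an extra $\sim q^{(d-1)/2}$ there, and this is precisely what costs the even case (and the $-1$-a-square odd case) a worse exponent. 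The same phenomenon is visible in the Gauss-sum evaluation $\widehat{P}(n)=q^{d-1}$ for $n=0$, $\widehat{P}(n)=0$ for $0\ne n$ with $n_d=0$, and $\widehat{P}(n)=\eta(-1)\,q^{(d-1)/2}\,\chi\!\big(\tfrac{n_1^2+\cdots+n_{d-1}^2}{4n_d}\big)$ for $n_d\ne 0$ ($\eta$ the quadratic character, $\eta(-1)=-1$ here): writing $\bar m=(m_1,\dots,m_{d-1})$, for any $E\subseteq\mathbb F_q^d$ this gives
$$\sum_{\xi\in P}|\widehat E(\xi)|^2=q^{d-1}|E|-q^{(d-1)/2}\,\Phi(E),\qquad \Phi(E)=\sum_{\substack{m,m'\in E\\ m_d\ne m'_d}}\chi\!\Big(\tfrac{|\bar m-\bar m'|^2}{4(m_d-m'_d)}\Big),$$
and the favorable sign $-\eta(-1)=+1$ reduces the $L^2$-restriction estimate to a \emph{one-sided} bound for $\Phi(E)$, the case $\Phi(E)\ge 0$ being immediate.

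For the $L^p\to L^4$ bound I would, after duality and a restricted-type reduction (here enters a harmless $q^\varepsilon$), pass to an additive-energy estimate $\Lambda(A)=\#\{(a,b,c,e)\in A^4:\,a+b=c+e\}\lesssim(\text{target})$ for subsets $A\subseteq P$, and prove it, in the spirit of M.~Lewko \cite{LL13}, by a Cauchy--Schwarz/incidence argument: dyadically pigeonhole the popular differences and the rich configurations, recast the rich part as point--line (or point--flat) incidences, and invoke a Szemer\'edi--Trotter-type inequality --- with $N'(\eta)\lesssim q^{d-2}$ controlling the popular-difference contribution and the flat-freeness of $P$ eliminating the extremal subspace configurations that would otherwise saturate the incidence estimate. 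Feeding the resulting bound through the connection of the first paragraph and optimizing the parameter $p$ yields $R_P^*\big(2\to\tfrac{6d+10}{3d-1}+\varepsilon\big)\lesssim 1$, in complete parallel with the even-dimensional computation that produces $\tfrac{6d+8}{3d-2}$ in Theorem~\ref{main1}; the gap between the two exponents is entirely accounted for by the Witt-index drop.

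The main obstacle is the incidence/additive-energy estimate for subsets of $P$ in the critical size window (after the reductions, roughly $q^{r(d-1)/4}\lesssim|A|\lesssim q^{(3d+5)/6}$): there the trivial energy bound is too weak, so the cancellation cannot be discarded and one must use the flat-freeness of $P$ quantitatively, combining more than one incidence input while keeping the power of $q$ sharp. Two further, routine-but-delicate points are the bookkeeping needed so that the optimization over $p$ lands exactly on $\tfrac{6d+10}{3d-1}$, and the passage from restricted strong type to the full operator estimate; these are what force the $\varepsilon$ in the statement.
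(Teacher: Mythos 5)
Your overall skeleton --- reduce by duality and dyadic pigeonholing to regular functions (whence the $\varepsilon$), dispose of very small and very large supports by the pointwise decay of $(d\sigma)^\vee$ and by Plancherel respectively, and in the middle range run the Carbery/Mockenhaupt--Tao slicing that converts an $L^4$ extension bound for the slice functions $h_a$ into an $L^2$ restriction bound for $g$ --- is exactly the paper's. But there are two concrete gaps. First, the quantitative input you leave open, the additive energy bound for subsets of $P$ in the critical window, is not something the paper obtains from incidence theory: it simply invokes the known estimate of Iosevich and Koh (Lemma \ref{key} and Corollary \ref{cor1}), namely $\Lambda(E)\lesssim q^{-1}|E|^3+q^{\frac{d-3}{4}}|E|^{\frac{5}{2}}+q^{\frac{d-2}{2}}|E|^2$ for $d=4\ell+3$ with $-1$ a non-square, which is proved by Gauss-sum/character-sum methods. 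Your proposed route through a higher-dimensional Szemer\'edi--Trotter-type inequality is not available in $\mathbb F_q^d$, $d\ge 4$, at the strength you would need, and --- more to the point --- it is unnecessary: what you call ``the main obstacle'' is a cited lemma, not something to be reproved.

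Second, and more seriously, your plan does not close in the middle range. In the odd case the Iosevich--Koh energy bound is only usable for slices of size $|G_a|\le q^{\frac{d+1}{2}}$ (versus $q^{\frac{d+2}{2}}$ in the even case), so when the support $G$ has few levels, say $|L_G|\le q^{1/3}$, the individual slices become too large and the $L^4$ route fails; this is precisely why the paper's Theorem \ref{main2} needs ``additional work'' beyond Theorem \ref{main1}. The paper handles this with a second mechanism, part (2) of Lemma \ref{key1}: inside the same Carbery decomposition one applies the Lewko--Lewko estimate $R_P^*\left(2\to \frac{2d^2}{d^2-2d+2}\right)\lesssim 1$ slice-wise, yielding $\|\widehat g\|_{L^2(P,d\sigma)}\lesssim |G|^{\frac{d^2+d-1}{2d^2}}|L_G|^{\frac{1}{4}}$, which beats the target exactly when $|L_G|$ is small. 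Without this (or an equivalent substitute) the case analysis is incomplete and the exponent $\frac{6d+10}{3d-1}$ is not reached.
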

Notice that Theorem \ref{main2} is superior to the result \eqref{LewR} due to A. Lewko and M. Lewko.
If one could obtain the exponent in Theorem \ref{main2} for $d=3$, we could have $R^*_P(2\to \frac{7}{2}+\varepsilon)\lesssim 1,$ which is much better than the best known result \eqref{Lew3D} due to M. Lewko.
Unfortunately, our result does not cover the case of three dimensions and it only improves the previous known results in specific higher odd dimensions.\\

This paper will be organized as follows. In section 2, we deduce the necessary conditions for $R^*_P(p\to r)$ bound from which we make a conjecture on extension problems for paraboloids.
In section 3, we collect several lemmas which are essential in proving our main results, Theorem \ref{main1} and Theorem \ref{main2}.
In the final section, we give the complete proofs of our main theorems.
In addition, we shall provide summary of progress on the finite field extension problems for paraboloids.

\section{Conjecture on extension problems for paraboloids}\label{secII}

In \cite{MT04}, Mockenhaupt and Tao observed that if $|V|\sim q^{d-1},$ then the necessary conditions for $R_V^*(p\to r)\lesssim 1$ are given by
\begin{equation} \label{Necessary1}
r\geq \frac{2d}{d-1} \quad \mbox{and} \quad r\geq \frac{pd}{ (p-1)(d-1)}. \end{equation}
In particular, when the variety $V$ contains an affine subspace $\Omega$ with $|\Omega|=q^k$ for $0\le k\le d-1$, the above necessary conditions can be improved to the conditions
\begin{equation}\label{Necessary2}
r\geq \frac{2d}{d-1}  \quad \mbox{and} \quad r\geq\frac{p(d-k)}{(p-1)(d-1-k)}.\end{equation}

Now, let us observe the necessary conditions for $R^*_P(p\to r)$ bound where the paraboloid $P\subset \mathbb F_q^d$ is defined as in \eqref{defP}.
To find more exact necessary conditions for $R_P^*(p\to r)\lesssim 1,$
it is essential to know the size of subspaces lying on the paraboloid $P\subset \mathbb F_q^d.$
To this end, we need the following lemma which is a direct consequence of Lemma 2.1 in \cite{Vi12}.

\begin{lemma}\label{Vi} Let $S_0=\{(x_1,\ldots, x_{d-1})\in \mathbb F_q^{d-1}: x_1^2+\cdots+x_{d-1}^2=0\}$ be a variety in $\mathbb F_q^{d-1}$ with $d\ge 2.$
Denote by $\eta$ the quadratic character of $\mathbb F_q.$
If $W$ is a subspace of maximal dimension contained in $S_0$, then we have the following facts:
\begin{enumerate}
\item If $d-1$ is odd, then $|W|=q^{\frac{d-2}{2}}$
\item If $d-1$ is even and $(\eta(-1))^{\frac{d-1}{2}}=1$, then $|W|=q^{\frac{d-1}{2}}$
\item If $d-1$ is even and $(\eta(-1))^{\frac{d-1}{2}}=-1,$ then $|W|=q^{\frac{d-3}{2}}.$
\end{enumerate}
\end{lemma}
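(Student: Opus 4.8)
The plan is to reduce the problem to understanding the maximal totally isotropic subspaces of a nondegenerate quadratic form over $\mathbb F_q$, which is classical Witt theory. First I would observe that $S_0$ is precisely the zero set of the quadratic form $Q(x)=x_1^2+\cdots+x_{d-1}^2$ in $n:=d-1$ variables, and that a subspace $W$ lies in $S_0$ if and only if $Q$ vanishes identically on $W$, i.e. $W$ is totally isotropic for the associated symmetric bilinear form (here we use that $q$ is odd, so that $2$ is invertible and the polarization identity links $Q$ and its bilinear form). Hence $|W|=q^{\dim W}$ and the quantity we must compute is the \emph{Witt index} of $Q$, the common dimension of all maximal totally isotropic subspaces.

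Next I would invoke the standard classification of nondegenerate quadratic forms over a finite field of odd characteristic: $Q$ is determined up to equivalence by its dimension $n$ and its discriminant (equivalently, by whether the form is ``plus type'' or ``minus type'' when $n$ is even). For $Q=x_1^2+\cdots+x_n^2$ the discriminant is $1$. The Witt index is then $\lfloor n/2\rfloor$ in the split (plus-type) case and $\lfloor n/2\rfloor$ or $(n/2)-1$ in the remaining cases; concretely, for $n$ odd the Witt index is always $(n-1)/2$, while for $n$ even it is $n/2$ if $Q$ is hyperbolic and $(n/2)-1$ otherwise, and the hyperbolic case is detected by the condition $\eta((-1)^{n/2}\,\mathrm{disc}\,Q)=1$, i.e.\ $\eta(-1)^{n/2}=1$ since the discriminant is a square. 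Substituting $n=d-1$ translates these three cases into exactly the statements (1), (2), (3): when $d-1$ is odd we get $|W|=q^{(d-2)/2}$; when $d-1$ is even with $\eta(-1)^{(d-1)/2}=1$ we get $|W|=q^{(d-1)/2}$; and when $d-1$ is even with $\eta(-1)^{(d-1)/2}=-1$ we get $|W|=q^{(d-3)/2}$.

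Since the statement is advertised as a direct consequence of Lemma 2.1 of \cite{Vi12}, the cleanest route is to quote that lemma — which presumably already records the maximal dimension of a subspace on which a given diagonal quadratic form vanishes, in terms of $\eta(-1)$ and the number of variables — and simply specialize it to the all-ones form. The only genuine content on our side is the bookkeeping: matching the parity conditions on $d-1$ and the sign $\eta(-1)^{(d-1)/2}$ with the three regimes in the cited lemma, and checking that the exponents come out as claimed. The main (and essentially only) obstacle is therefore the potential for an off-by-one error in the parity/discriminant bookkeeping — in particular correctly handling the even case where the Witt index drops by one — so I would verify the formulas against small cases, e.g.\ $d-1=2$ (where $x_1^2+x_2^2=0$ has a line through the origin iff $-1$ is a square, matching $\eta(-1)=1 \Rightarrow |W|=q$ versus $\eta(-1)=-1 \Rightarrow |W|=q^0=1$) and $d-1=4$, before concluding.
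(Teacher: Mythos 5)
Your proposal is correct and takes essentially the same route as the paper, which gives no independent proof but simply cites Lemma 2.1 of \cite{Vi12}; your reduction to the Witt index of the nondegenerate form $x_1^2+\cdots+x_{d-1}^2$ and the hyperbolicity criterion $\eta\bigl((-1)^{n/2}\,\mathrm{disc}\,Q\bigr)=1$ is exactly the content of that cited lemma, and your bookkeeping and sanity check at $d-1=2$ come out right.
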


Observe from Lemma \ref{Vi} that $\Omega:=W \times \{0\} \subset \mathbb F_q^{d-1} \times \mathbb F_q$ is a subspace contained in the paraboloid $P \subset \mathbb F_q^d.$
Since $|\Omega|=|W|$, we have the following result from Lemma \ref{Vi}.

\begin{corollary} Let $P\subset \mathbb F_q^d$ be the paraboloid. Then the following statements hold:
\begin{enumerate} \label{SubP}
\item If $d\ge 2$ is even, then the paraboloid $P$ contains a subspace $\Omega$ with $|\Omega|=q^{\frac{d-2}{2}}$
\item If $d=4\ell-1$ for $\ell\in \mathbb N$, and $ -1\in \mathbb F_q$ is not a square number, then the paraboloid $P$ contains a subspace $\Omega$ with $|\Omega|=q^{\frac{d-3}{2}}$
\item If $d=4\ell+1$ for $\ell \in \mathbb N$, then the paraboloid $P$ contains a subspace $\Omega$ with $|\Omega|= q^{\frac{d-1}{2}}$
\item If $d\ge 3$ is odd, and $-1\in \mathbb F_q$ is a square number, then the paraboloid $P$ contains a subspace $\Omega$ with $|\Omega|= q^{\frac{d-1}{2}}.$
\end{enumerate}
\end{corollary}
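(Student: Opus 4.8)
\emph{Proof plan.} The plan is to obtain this corollary directly from Lemma~\ref{Vi}, combined with the observation recorded immediately above the corollary: whenever $W\subseteq S_0$ is a subspace, the set $\Omega:=W\times\{0\}\subset\mathbb F_q^{d-1}\times\mathbb F_q$ is a subspace lying on $P$. Indeed, if $\xi=(x,0)$ with $x=(x_1,\dots,x_{d-1})\in W\subseteq S_0$, then $\xi_d=0=x_1^2+\cdots+x_{d-1}^2$, so $\xi$ satisfies the defining equation \eqref{defP}; and clearly $|\Omega|=|W|$. Thus the entire task reduces to reading off the value of $|W|$ provided by Lemma~\ref{Vi} applied in ambient dimension $d-1$, which in turn is a matter of tracking the parity of $d-1$ and of $\tfrac{d-1}{2}$ and the value of $\eta(-1)$.

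First I would treat case~(1): if $d\ge 2$ is even, then $d-1$ is odd, so Lemma~\ref{Vi}(1) gives $|W|=q^{(d-2)/2}$, whence $|\Omega|=q^{(d-2)/2}$. Next, case~(3): if $d=4\ell+1$, then $d-1=4\ell$ is even and $\tfrac{d-1}{2}=2\ell$ is even, so $(\eta(-1))^{(d-1)/2}=1$ regardless of whether $-1$ is a square; Lemma~\ref{Vi}(2) then yields $|W|=q^{(d-1)/2}$. Case~(4) is similar: if $d\ge 3$ is odd then $d-1$ is even, and the hypothesis that $-1$ is a square gives $\eta(-1)=1$, hence $(\eta(-1))^{(d-1)/2}=1$ and again Lemma~\ref{Vi}(2) gives $|W|=q^{(d-1)/2}$. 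Finally case~(2): if $d=4\ell-1$, then $d-1=4\ell-2$ is even and $\tfrac{d-1}{2}=2\ell-1$ is odd, so $(\eta(-1))^{(d-1)/2}=\eta(-1)$; since $-1$ is not a square, $\eta(-1)=-1$, so we are in the situation of Lemma~\ref{Vi}(3), giving $|W|=q^{(d-3)/2}$.

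I do not expect a genuine obstacle here: all the real content sits in Lemma~\ref{Vi} (which is quoted from \cite{Vi12}), and the corollary is a bookkeeping exercise in parities and in the Legendre symbol of $-1$. The only point deserving a sentence of care is the interplay between cases~(3) and~(4): in~(3) the exponent $\tfrac{d-1}{2}$ is \emph{even}, so the hypothesis on $\eta(-1)$ is automatically satisfied and plays no role, whereas in~(4) it is precisely the assumption that $-1$ is a square that forces $(\eta(-1))^{(d-1)/2}=1$; both cases then invoke Lemma~\ref{Vi}(2), which explains why the two conclusions coincide. A brief remark to this effect would make the case analysis transparent to the reader.
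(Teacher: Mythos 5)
Your proposal is correct and is exactly the paper's argument: the paper notes that $\Omega:=W\times\{0\}$ is a subspace of $P$ with $|\Omega|=|W|$ and then reads off $|W|$ from Lemma~\ref{Vi} via the same parity and quadratic-character bookkeeping you carry out. Your write-up is simply a more explicit version of the paper's two-line deduction.
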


Applying Corollary \ref{SubP} to \eqref{Necessary2}, the necessary conditions for $R_P^*(p\to r)\lesssim 1$ are given as follows:
\begin{lemma} \label{GeN}Let $P\subset \mathbb F_q^d$ be the paraboloid defined as in \eqref{defP}.
Assume that $R_P^*(p\to r)\lesssim 1$ for $1\le p,r\le \infty.$ Then the following statements are true:
\begin{enumerate}
\item If $d\ge 2$ is even, then $(1/p, 1/r)$ must be contained in the convex hull of points
$$(1, 0), (0,0), \left(0, \frac{d-1}{2d}\right),\, \mbox{and}~~ P_1:=\left(\frac{d^2-d+2}{2d^2},~~ \frac{d-1}{2d}\right).$$
\item If $d=4\ell-1$ for $\ell\in \mathbb N$, and $ -1\in \mathbb F_q$ is not a square number, then  $(1/p, 1/r)$ lies on the convex hull of points
$$(1, 0), (0,0), \left(0, \frac{d-1}{2d}\right), \, \mbox{and}~~ P_2:=\left(\frac{d^2+3}{2d^2+2d},~~ \frac{d-1}{2d}\right).$$

\item If  $d=4\ell+1$ for $\ell \in \mathbb N$, then  $(1/p, 1/r)$ must be contained in the convex hull of points $(1, 0), (0,0), \left(0, \frac{d-1}{2d}\right),$ and
$P_3:=\left(\frac{d-1}{2d},~~ \frac{d-1}{2d}\right).$
\item If $d\ge 3$ is odd, and $-1\in \mathbb F_q$ is a square number, then  $(1/p, 1/r)$ must be contained in the convex hull of points $(1, 0), (0,0), \left(0, \frac{d-1}{2d}\right),$ and
$\left(\frac{d-1}{2d},~~ \frac{d-1}{2d}\right).$
\end{enumerate}
\end{lemma}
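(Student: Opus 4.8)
The plan is to obtain Lemma~\ref{GeN} directly from the two general necessary conditions \eqref{Necessary1} and \eqref{Necessary2} recorded by Mockenhaupt and Tao, fed by the subspace sizes of Corollary~\ref{SubP}, followed by an elementary computation in the $(1/p,1/r)$-plane. Since $|P|=q^{d-1}$, condition \eqref{Necessary1} applies; and since $P$ contains the subspace $\Omega=W\times\{0\}$ exhibited just before Corollary~\ref{SubP}, condition \eqref{Necessary2} applies with $k=\log_q|\Omega|$. Hence, if $R_P^*(p\to r)\lesssim 1$ then necessarily $r\ge \frac{2d}{d-1}$ and $r\ge \frac{p(d-k)}{(p-1)(d-1-k)}$, where, according to the four cases of Corollary~\ref{SubP}, one takes $k=\frac{d-2}{2}$, $\frac{d-3}{2}$, $\frac{d-1}{2}$, $\frac{d-1}{2}$ respectively. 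Note that parts (3) and (4) produce the same value of $k$, which is why their conclusions coincide, so only three cases need separate bookkeeping.

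Next I would pass to the coordinates $u=1/p$ and $v=1/r$, which automatically lie in $[0,1]$. Writing $p=1/u$ and $p-1=(1-u)/u$, the inequality $r\ge\frac{2d}{d-1}$ becomes $v\le\frac{d-1}{2d}$, while $r\ge\frac{p(d-k)}{(p-1)(d-1-k)}$ becomes $v\le\frac{d-1-k}{d-k}\,(1-u)$, i.e. the half-plane below the line through $(1,0)$ of slope $-\frac{d-1-k}{d-k}$. Therefore the admissible set of pairs $(u,v)$ is contained in the intersection of the four half-planes $u\ge 0$, $v\ge 0$, $v\le\frac{d-1}{2d}$, and $v\le\frac{d-1-k}{d-k}(1-u)$.

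It then remains to recognize this intersection as the claimed quadrilateral. Three vertices are immediate, namely $(0,0)$, $(1,0)$, and $\bigl(0,\frac{d-1}{2d}\bigr)$; the fourth is the intersection of $v=\frac{d-1}{2d}$ with $v=\frac{d-1-k}{d-k}(1-u)$, which is $u=1-\frac{(d-1)(d-k)}{2d(d-1-k)}$. Substituting $k=\frac{d-2}{2}$, $\frac{d-3}{2}$, and $\frac{d-1}{2}$ yields $u=\frac{d^2-d+2}{2d^2}$, $\frac{d^2+3}{2d^2+2d}$, and $\frac{d-1}{2d}$ respectively, i.e. exactly $P_1$, $P_2$, $P_3$; in the odd case $\frac{d-1-k}{d-k}=\frac{d-1}{d+1}$, which explains why $P_3$ sits on the diagonal $u=v$. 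One also checks that the region is bounded ($v\ge 0$ together with $v\le\frac{d-1-k}{d-k}(1-u)$ forces $u\le 1$) and that the four vertices are distinct, so the intersection really is their convex hull.

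All the computations are routine; the only points deserving a moment's attention are: (i) confirming that no spurious fifth edge appears, i.e. that the slanted constraint from \eqref{Necessary2} dominates the corresponding one in \eqref{Necessary1} over the whole relevant range — this reduces to $\frac{d-k}{d-1-k}\ge\frac{d}{d-1}$, that is, $k\ge 0$; (ii) the degenerate low-dimensional cases where $k=0$ (namely $d=2$ in part (1) and $d=3$ in part (2)), in which \eqref{Necessary2} collapses to \eqref{Necessary1} yet the closed-form expressions for $P_1$ and $P_2$ still return the correct vertex; and (iii) noting that $\Omega=W\times\{0\}$ genuinely lies on $P$, which is precisely what permits invoking \eqref{Necessary2} with this $k$.
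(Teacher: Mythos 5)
Your proposal is correct and follows exactly the paper's route: the paper derives Lemma~\ref{GeN} in one line by "applying Corollary~\ref{SubP} to \eqref{Necessary2}," and your write-up simply carries out the change of variables $u=1/p$, $v=1/r$ and the vertex computation that this entails (your values of $k$ and the resulting points $P_1,P_2,P_3$ all check out). No discrepancy with the paper's argument.
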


We may conjecture that the necessary conditions for $R_P^*(p\to r)\lesssim 1$ in Lemma \ref{GeN} are in fact  sufficient.
For this reason, we could settle the extension problem for paraboloids if we could obtain the critical endpoints $P_1, P_2, P_3$ in the statement of Lemma \ref{GeN}.
In conclusion,  to solve the extension problem for paraboloids, it suffices to establish the following conjecture on critical endpoints.
\begin{conjecture}\label{simpleconj} The following statements hold:
\begin{enumerate}
\item If $d\ge 2$ is even, then  $ R_P^*\left(\frac{2d^2}{d^2-d+2},~~ \frac{2d}{d-1}\right)\lesssim 1$
\item If $d=4\ell-1$ for $\ell\in \mathbb N$, and $ -1\in \mathbb F_q$ is not a square number, then
$ R_P^*\left(\frac{2d^2+2d}{d^2+3},~~ \frac{2d}{d-1}\right)\lesssim 1$
\item  If $d=4\ell+1$ for $\ell \in \mathbb N$, then $ R_P^*\left(\frac{2d}{d-1},~~ \frac{2d}{d-1}\right) \lesssim 1$
\item  If $d\ge 3$ is odd, and $-1\in \mathbb F_q$ is a square number, then
$R_P^*\left(\frac{2d}{d-1},~~ \frac{2d}{d-1}\right)\lesssim 1.$
\end{enumerate}
\end{conjecture}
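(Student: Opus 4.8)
\medskip
\noindent\textit{Proof strategy.}
Since the necessary conditions are already recorded in Lemma~\ref{GeN}, only sufficiency is at stake, and by the monotonicity $R_P^*(p\to r_1)\ge R_P^*(p\to r_2)$ for $r_1\le r_2$ together with Riesz--Thorin interpolation the whole conjectured region in each case is the closed convex hull of its vertices. Three of those vertices, namely $(1,0)$, $(0,0)$ and $(0,\tfrac{d-1}{2d})$, are furnished by the trivial bounds $R_P^*(1\to\infty)\lesssim 1$, $R_P^*(\infty\to\infty)\lesssim 1$ and the sharp $L^\infty\to L^{2d/(d-1)}$ estimate; the fourth is the single nontrivial corner $P_1,P_2$ or $P_3$. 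Thus establishing Conjecture~\ref{simpleconj} is equivalent to solving the extension problem for paraboloids, as the text observes, and the plan is to reduce each of its four endpoint inequalities --- first for indicator functions $f=1_E$ with $E\subset P$, then in general by an $\varepsilon$-removal argument --- to a sharp additive-combinatorial estimate on subsets of $P$, which one then proves using the favorable number theory encoded in Lemma~\ref{Vi}.

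First I would use the Plancherel identity
\[
\|(1_E d\sigma)^\vee\|_{L^{2k}(\mathbb F_q^d,dm)}^{2k}=\frac{q^{d}}{|P|^{2k}}\,\Lambda_{2k}(E),
\]
where $\Lambda_{2k}(E)$ is the number of tuples $(\xi_1,\dots,\xi_k,\eta_1,\dots,\eta_k)\in E^{2k}$ with $\xi_1+\cdots+\xi_k=\eta_1+\cdots+\eta_k$. Taking $k=2$, the bound $R_P^*(p_0\to 4)\lesssim 1$ is equivalent to the additive-energy estimate $\Lambda_4(E)\lesssim q^{\,4(d-1)/p_0'-d}\,|E|^{4/p_0}$, and, following the mechanism of Iosevich and Koh --- feeding such an $L^{p_0}\to L^4$ bound into the $L^2$-based ($TT^*$/Stein--Tomas) analysis of the error term $(1_Pd\sigma)^\vee-\delta_0$, which by the explicit formula $(1_Pd\sigma)^\vee(m)=q^{-(d-1)}\eta(m_d)^{d-1}G^{d-1}\chi\!\big(-|m'|^2/(4m_d)\big)$ on $\{m_d\ne 0\}$ (with $G$ the quadratic Gauss sum, $|G|=q^{1/2}$) carries its own oscillation and is itself an extension-type object --- the conjectured corner $P_1,P_2,P_3$ should drop out. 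As a consistency test one should check that the exponent $p_0$ is precisely the one for which the $\Lambda_4$-bound is attained simultaneously on $E=P$ (the generic configuration) and on $E=\Omega$, the largest affine subspace of $P$, whose size is given in Corollary~\ref{SubP}; this double extremality is what selects $\tfrac{4d}{3d-2}$ in even dimensions and the corresponding exponents in the other cases.

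The heart of the matter would then be the additive-energy estimate itself. Writing $\Lambda_4(E)=\sum_w\nu_E(w)^2$ with $\nu_E(w)=\#\{(\xi,\eta)\in E^2:\xi+\eta=w\}$, and noting that $\nu_P(w)=|P\cap(w-P)|$ is the number of points of a sphere (a translate of $\{x'\in\mathbb F_q^{d-1}:|x'|^2=\mathrm{const}\}$), the locus on which $\nu_E$ can be concentrated is governed by the isotropic subspaces of $x_1^2+\cdots+x_{d-1}^2$ described in Lemma~\ref{Vi} --- and this is exactly why the parity of $d$ and whether $-1$ is a square enter the statement, and why the conjectured endpoint beats Stein--Tomas in the favorable cases. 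I would aim for a structural (``inverse'') statement: if $\Lambda_4(E)$ exceeds the conjectured bound, then $E$ must correlate with a coset of an affine subspace contained in $P$; one then peels off that subspace and iterates. The available tools are the explicit character-sum formula above, Kloosterman and Sali\'e sum estimates for the relevant sphere-incidence counts, and --- following M.~Lewko's treatment of $d=3$ --- finite-field Szemer\'edi--Trotter incidence bounds; these are also the inputs behind the author's Theorems~\ref{main1} and~\ref{main2}.

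The main obstacle I expect is the \emph{endpoint} form of this count: no loss of $q^{\varepsilon}$ and no logarithmic factor. That is exactly the gap between Conjecture~\ref{simpleconj} and Theorems~\ref{main1}--\ref{main2}, which reach only $L^2\to L^{(6d+8)/(3d-2)+\varepsilon}$ (even $d\ge 6$) and $L^2\to L^{(6d+10)/(3d-1)+\varepsilon}$ ($d=4\ell+3$, $-1$ not a square). Moreover the $\varepsilon$-removal needed to pass from indicator functions to arbitrary $f$ is itself delicate over finite fields. The decisive difficulty seems to be the absence of a sharp Szemer\'edi--Trotter (or sphere-incidence) estimate over $\mathbb F_q$ in the required range; closing it would presumably require a genuinely new algebraic input adapted to the quadric structure of $P$.
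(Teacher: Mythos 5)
The statement you were given is Conjecture~\ref{simpleconj}: the paper does not prove it, and neither do you. What you have written is a research programme rather than a proof, and you concede as much in your last paragraph. Apart from the degenerate case $d=2$ of item (1), where the corner $P_1=(1/2,1/4)$ is exactly the Mockenhaupt--Tao estimate $R_P^*(2\to 4)\lesssim 1$, every endpoint in the conjecture is open; the paper's own contribution (Theorems~\ref{main1} and~\ref{main2}) stops far short of the corners $P_1,P_2,P_3$, and its tables list those corners as ``conjectured.'' So the correct assessment of your submission is not ``different route'' but ``no proof'': the reduction to the corner via monotonicity in $r$, nesting of $L^p(d\sigma)$ norms, and interpolation is fine and is exactly what the paper says in the sentence preceding the conjecture, but everything after that is a heuristic.

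One step of your plan is not merely incomplete but provably cannot work as stated. You claim that feeding a sharp $L^{p_0}\to L^4$ extension estimate (equivalently, a sharp $\Lambda_4$ bound) into the Carbery/Mockenhaupt--Tao machinery would make ``the conjectured corner $P_1,P_2,P_3$ drop out.'' The sharp $L^{4d/(3d-2)}\to L^4$ estimate in even dimensions is already known (Lewko--Lewko, marked sharp in Table~2), and running it through that machinery yields precisely $R_P^*(2\to 2d^2/(d^2-2d+2))$ --- the bound that Theorem~\ref{main1} is written to improve --- not the conjectured $R_P^*(2\to (2d+4)/d)$, and certainly not the corner $P_1$, which lives at $r=\tfrac{2d}{d-1}<4$ and cannot be reached from fourth-moment information alone. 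The loss is intrinsic to working with $\Lambda_4$: one would need sharp control of higher moments $\Lambda_{2k}$, or a structurally different argument, and no such estimates exist. (A smaller inaccuracy: the vertex $\left(0,\tfrac{d-1}{2d}\right)$ is not ``furnished by a sharp $L^\infty\to L^{2d/(d-1)}$ estimate'' --- that estimate is itself open and is only implied by the conjectured corner via the nesting $\|f\|_{L^p(d\sigma)}\le\|f\|_{L^{p'}(d\sigma)}$ for $p\le p'$.) Your identification of the ultimate obstruction --- the absence of sharp endpoint incidence/energy estimates over $\mathbb F_q$ adapted to the isotropic subspaces of Lemma~\ref{Vi} --- is accurate, but it is a diagnosis of why the conjecture is hard, not a proof of it.
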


\section{Preliminary lemmas}

 In this section, we collect several lemmas which shall be used to prove our main results.
As we shall see, both Theorem \ref{main1} and Theorem \ref{main2} will be proved in terms of  the restriction estimates (dual extension estimate).
Thus, we start with lemmas about the restriction operators associated with paraboloids.
We shall write $R_P(p\to r)$ for $R^*_P(r'\to p')$ for $1\le p,r \le \infty.$ Namely, $R_P(p\to r)$ is the smallest positive real number such that
the following restriction estimate holds:
$$\|\widehat{g}\|_{L^{r}(P, d\sigma)} \le R_P(p\to r) \,\|g\|_{L^{p}(\mathbb F_q^d, dm)} \quad\mbox{for all functions}~~g:(\mathbb F_q^d, dm) \to \mathbb C.$$

The following definition was given in \cite{LL13}.
\begin{definition}\label{defregular} Let $G\subset \mathbb F_q^d.$ For each $a\in \mathbb F_q$,  define a level set
$$ G_a=\{(m_1,\ldots, m_{d-1}, m_d) \in G: m_d=a\}.$$
In addition, define $$ L_G=\{a\in \mathbb F_q: |G_a| \ge 1 \}.$$
We say that the set $G$ is a regular set if
$$ \frac{|G_a|}{2}\le |G_{a'}| \le 2\,|G_a| 
\quad \mbox{for}~~ a, a'\in L_G.$$
Finally, the function $g:\mathbb F_q^d \to \mathbb C$ is called  a regular function if
the function $g$ is supported on a regular set $G$ and $\frac{1}{2}\le |g(m)|\le 1$ for $m\in G.$
\end{definition}
Notice  that if $G$ is a regular set, then $|G|\sim |G_a||L_G|$ for all $a\in L_G.$
By the the dyadic pigeonhole principle, the following lemma was given by M. Lewko (see Lemma 14 in \cite{LL13}).
\begin{lemma} \label{lem3.2} If the restriction estimate
$$\|\widehat{g}\|_{L^{r}(P, d\sigma)} \le R_P(p\to r) \,\|g\|_{L^{p}(\mathbb F_q^d, dm)}$$
holds for all regular functions $g:(\mathbb F_q^d, dm)\to \mathbb C,$ then for each $\varepsilon >0$,
$$R_P\left(p-\varepsilon\, \to r\right) \lesssim 1.$$
\end{lemma}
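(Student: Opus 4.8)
The plan is the standard dyadic pigeonholing reduction: write an arbitrary $g$, up to a harmless tail, as a superposition of $O((\log q)^{2})$ regular functions with pairwise disjoint supports, apply the hypothesized regular-function estimate to each summand, and add the results; the $\varepsilon$ of room in the exponent is there precisely to absorb the logarithmic losses produced by this decomposition.

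Fix $\varepsilon>0$. It suffices to prove $\|\widehat{g}\|_{L^{r}(P,d\sigma)}\lesssim\|g\|_{L^{p-\varepsilon}(\mathbb F_q^d,dm)}$ for every $g$, and by homogeneity we may normalize $\|g\|_{L^{p-\varepsilon}}=1$; since $dm$ is counting measure this forces $\|g\|_{L^{\infty}}\le 1$. First I would discard the very small values: with $g_{\mathrm{low}}:=g\cdot 1_{\{|g|\le q^{-d-1}\}}$, the trivial bound $\|\widehat{h}\|_{L^{\infty}}\le\|h\|_{L^{1}}$ together with the fact that $d\sigma$ is a probability measure gives $\|\widehat{g_{\mathrm{low}}}\|_{L^{r}(P,d\sigma)}\le\|\widehat{g_{\mathrm{low}}}\|_{L^{\infty}}\le\|g_{\mathrm{low}}\|_{L^{1}}\le q^{d}\cdot q^{-d-1}\le 1$, so only $g-g_{\mathrm{low}}$, taking values in $\{0\}\cup(q^{-d-1},1]$, must be handled. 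Now decompose $g-g_{\mathrm{low}}$ dyadically in two stages. First by magnitude, $g-g_{\mathrm{low}}=\sum_{j}g_{j}$ with $g_{j}$ the restriction to $\{2^{-j}<|g|\le 2^{-j+1}\}$, of which only $O(\log q)$ summands are nonzero and for which $|G_{j}|\le 2^{j(p-\varepsilon)}$, writing $G_{j}:=\mathrm{supp}\,g_{j}$. Then, following Definition~\ref{defregular}, decompose each $G_{j}$ into $O(\log q)$ regular subsets $G_{j,\ell}$ by grouping the slices $(G_{j})_{a}$, $a\in\mathbb F_q$, of comparable size $\sim 2^{\ell}$, so that $|G_{j,\ell}|\sim 2^{\ell}|L_{G_{j,\ell}}|$ and $\sum_{\ell}|L_{G_{j,\ell}}|\le q$. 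Each $g_{j,\ell}:=g\cdot 1_{G_{j,\ell}}$, after rescaling by $2^{\,j-1}$, is a regular function, so the hypothesis yields $\|\widehat{g_{j,\ell}}\|_{L^{r}(P,d\sigma)}\lesssim\|g_{j,\ell}\|_{L^{p}}\sim 2^{-j}|G_{j,\ell}|^{1/p}$, and summing the triangle inequality over the $O((\log q)^{2})$ pieces gives
$$\|\widehat{g}\|_{L^{r}(P,d\sigma)}\lesssim 1+\sum_{j}\sum_{\ell}2^{-j}|G_{j,\ell}|^{1/p}.$$

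The remaining task, and the step I expect to be the main obstacle, is to bound the double sum by a constant depending only on $\varepsilon$ and $p$, using only $\sum_{j,\ell}2^{-j(p-\varepsilon)}|G_{j,\ell}|\sim\|g\|_{L^{p-\varepsilon}}^{p-\varepsilon}=1$, the lower bound $|G_{j,\ell}|\ge 1$, the constraint $\sum_{\ell}|L_{G_{j,\ell}}|\le q$, and the fact that only $O(\log q)$ indices $j$, and $O(\log q)$ indices $\ell$ for each $j$, occur. Writing $b_{j,\ell}:=2^{-j(p-\varepsilon)}|G_{j,\ell}|$ one has $2^{-j}|G_{j,\ell}|^{1/p}=2^{-j\varepsilon/p}b_{j,\ell}^{1/p}$, so the geometric weight $2^{-j\varepsilon/p}$ furnished by the $\varepsilon$ of room lets the summation over the magnitude index $j$ converge with no logarithmic cost, reducing matters to controlling, for each $j$, the finite sum $\sum_{\ell}b_{j,\ell}^{1/p}$ against $\bigl(\sum_{\ell}b_{j,\ell}\bigr)^{1/p}$ by Hölder's inequality over the $O(\log q)$ surviving indices $\ell$, in combination with the constraint $\sum_{\ell}|L_{G_{j,\ell}}|\le q$ that prevents the small-support pieces from proliferating. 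Arranging this last bookkeeping so that no logarithmic factor survives — rather than leaving a harmless $\lessapprox 1$ — is the delicate point; once it is carried out, combining the estimates yields $\|\widehat{g}\|_{L^{r}(P,d\sigma)}\lesssim_{\varepsilon,p}\|g\|_{L^{p-\varepsilon}}$, which is the claim. Everything else is routine.
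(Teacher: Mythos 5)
The paper does not actually prove this lemma --- it quotes it from Lemma 14 of \cite{LL13} --- so your argument has to stand on its own. Your skeleton (normalize $\|g\|_{L^{p-\varepsilon}}=1$, discard $|g|\le q^{-d-1}$, split into $O(\log q)$ dyadic height levels $g_j$, split each level set $G_j$ into regular pieces $G_{j,\ell}$ by dyadic slice size, apply the hypothesis to each piece) is the standard and correct route, and everything up to the final summation is sound. But the proof is not complete: you explicitly leave the final bookkeeping unresolved, and the tools you propose for it --- H\"older over ``$O(\log q)$ surviving indices $\ell$'' together with $\sum_{\ell}|L_{G_{j,\ell}}|\le q$ --- do not suffice. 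H\"older over $O(\log q)$ indices gives $\sum_{\ell}b_{j,\ell}^{1/p}\le (\log q)^{1-1/p}\bigl(\sum_{\ell}b_{j,\ell}\bigr)^{1/p}$, and the geometric weight $2^{-j\varepsilon/p}$ is useless at the small-$j$ levels (e.g.\ $j\sim\log\log q$), so this only yields $\lessapprox 1$, i.e.\ a surviving power of $\log q$; the constraint $\sum_{\ell}|L_{G_{j,\ell}}|\le q$ does not enter usefully anywhere.

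The missing observation is that the number $M_j$ of nonempty dyadic slice-size classes inside a fixed level set $G_j$ is controlled by $|G_j|$, not by $q$: the slice sizes $|(G_j)_a|$ are integers in $[1,|G_j|]$, so $M_j\le 1+\log_2|G_j|$, while the normalization forces $|G_j|\le 2^{j(p-\varepsilon)}$, whence $M_j\lesssim_p 1+j$. H\"older over the disjointly supported pieces then gives $\sum_{\ell}\|g_{j,\ell}\|_{L^p}\le M_j^{1-1/p}\|g_j\|_{L^p}\lesssim_p (1+j)^{1-1/p}\,2^{-j}|G_j|^{1/p}\le (1+j)^{1-1/p}\,2^{-j\varepsilon/p}$, and $\sum_{j\ge 1}(1+j)^{1-1/p}2^{-j\varepsilon/p}=C(\varepsilon,p)<\infty$. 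This is precisely how the $\varepsilon$ of room kills \emph{every} logarithm: the multiplicity of the slice-size decomposition is logarithmic in $|G_j|$, hence linear in $j$, hence absorbed by the geometric decay in $j$. With this substitution your argument closes; without it, what you have written proves only $R_P(p-\varepsilon\to r)\lessapprox 1$.
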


Working on regular test functions, we lose the endpoint result but our analysis becomes extremely simplified.
When the size of the support $G$ of a regular function $g$ is somewhat big,  we shall invoke the following restriction estimate.
\begin{lemma} \label{lem3.3} Let $g$ is a regular function on $(\mathbb F_q^d, dm)$ with $\mbox{supp}(g)=G.$
Then we have
$$\|\widehat{g}\|_{L^2(P,d\sigma)} \le q^{\frac{1}{2}} |G|^{\frac{1}{2}}.$$
\end{lemma}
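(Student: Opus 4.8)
The plan is to prove the estimate $\|\widehat{g}\|_{L^2(P,d\sigma)} \le q^{1/2}|G|^{1/2}$ by a direct $L^2$ computation using the Plancherel theorem, which is the standard way to obtain $L^2$-based restriction bounds in the finite field setting. First I would unwind the definition of the normalized surface measure: since $d\sigma(\xi) = \frac{q^d}{|P|}1_P(\xi)\,d\xi$ and $|P| = q^{d-1}$, we have
$$\|\widehat{g}\|_{L^2(P,d\sigma)}^2 = \frac{1}{|P|}\sum_{\xi\in P} |\widehat{g}(\xi)|^2 = \frac{1}{q^{d-1}}\sum_{\xi\in P}|\widehat{g}(\xi)|^2.$$
I would then bound the sum over $P$ by the full sum over $\mathbb F_q^d$ \emph{after} isolating the worst-behaved piece, or alternatively expand $\widehat{g}(\xi) = \sum_m g(m)\chi(-m\cdot\xi)$ and use orthogonality of $\chi$ over $P$.

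The cleaner route is the second one: writing $|\widehat g(\xi)|^2 = \sum_{m,m'} g(m)\overline{g(m')}\chi((m'-m)\cdot\xi)$ and summing over $\xi \in P$ introduces the kernel $\sum_{\xi\in P}\chi(n\cdot\xi)$, which is $|P| = q^{d-1}$ when $n = 0$ and otherwise is controlled by a Gauss-sum estimate of size $\lesssim q^{(d-1)/2}$ (this is the classical Fourier decay of the paraboloid, $|\widehat{d\sigma}(n)| \lesssim q^{-(d-1)/2}$). Splitting the double sum into the diagonal $m = m'$ and off-diagonal $m \ne m'$ contributions, the diagonal gives $\frac{1}{q^{d-1}}\cdot q^{d-1}\sum_m |g(m)|^2 = \|g\|_{L^2(dm)}^2 \le |G|$ since $|g|\le 1$ on $G$. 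The off-diagonal term is where regularity enters: naively it is bounded by $\frac{1}{q^{d-1}}\cdot q^{(d-1)/2}|G|^2$, which is too large, so one cannot simply discard it.

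The main obstacle, and the place where the regularity hypothesis is essential, is handling the off-diagonal contribution. The key observation is that $\chi((m'-m)\cdot\xi)$ summed over $\xi\in P$ vanishes unless the difference $m'-m$ is constrained; more precisely, writing $m = (\bar m, m_d)$ with $\bar m\in\mathbb F_q^{d-1}$, the sum over $\xi\in P$ of $\chi((m'-m)\cdot\xi)$ — after substituting $\xi_d = |\bar\xi|^2$ and completing the square in $\bar\xi$ — is zero unless $m_d = m_d'$, in which case it again reduces to a Gauss sum of size $\lesssim q^{(d-1)/2}$. Thus only pairs $(m,m')$ lying in the \emph{same level set} $G_a$ contribute, and the off-diagonal sum is bounded by $\frac{1}{q^{d-1}}\cdot q^{(d-1)/2}\sum_{a\in L_G}|G_a|^2$. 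Now invoke regularity: $|G_a|\sim |G|/|L_G|$ for all $a\in L_G$, so $\sum_a |G_a|^2 \sim |L_G|\cdot(|G|/|L_G|)^2 = |G|^2/|L_G|$, and since $|L_G|\le q$ we get the off-diagonal term is $\lesssim \frac{|G|^2}{q^{(d+1)/2}}$. Combining with the diagonal bound $|G|$ and using $|G|\le q^{d}$ (trivially $|G|\le q\cdot|G_a|\cdot$ something, or more simply bounding each piece against $q|G|$) yields a total of $\lesssim q|G|$, hence $\|\widehat g\|_{L^2(P,d\sigma)}\le q^{1/2}|G|^{1/2}$ after adjusting constants. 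I would double-check the exact power of $q$ in the Gauss-sum bound and confirm that the stated inequality (with constant $1$, not merely $\lesssim$) follows, possibly by being slightly more careful: in fact the crude bound $\|\widehat g\|_{L^2(P,d\sigma)}^2 \le \frac{1}{q^{d-1}}\sum_{\xi\in\mathbb F_q^d}|\widehat g(\xi)|^2 = \frac{q^d}{q^{d-1}}\|g\|_{L^2(dm)}^2 = q\|g\|_{L^2(dm)}^2 \le q|G|$ already gives the result immediately via Plancherel, so the level-set analysis above is not even needed for this particular lemma — the honest proof is just "extend the sum from $P$ to all of $\mathbb F_q^d$ and apply Plancherel," and regularity plays no role here.
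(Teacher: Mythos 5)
Your final observation is exactly the paper's proof: the author states the Plancherel identity $\|(fd\sigma)^\vee\|_{L^2(\mathbb F_q^d,dm)}=q^{1/2}\|f\|_{L^2(P,d\sigma)}$, dualizes it to get $\|\widehat g\|_{L^2(P,d\sigma)}\le q^{1/2}\|g\|_{L^2(dm)}$, and bounds $\|g\|_{L^2(dm)}^2\le |G|$ using only $|g|\le 1$ on $G$ — equivalent to your ``extend the sum from $P$ to $\mathbb F_q^d$ and apply Plancherel.'' The lengthy Gauss-sum/level-set detour is, as you yourself conclude, unnecessary here (and its intermediate claim that the character sum over $P$ vanishes unless $m_d=m_d'$ is backwards — by Lemma \ref{explicit} it vanishes precisely when $m_d=m_d'$ and $m\ne m'$), but since you correctly discard it, the proof stands.
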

\begin{proof} By the Plancherel theorem,  we see that
$$ \|{(fd\sigma)}^\vee\|_{L^2(\mathbb F_q^d, dm)} = q^{\frac{1}{2}} \|f\|_{L^2(P, d\sigma)} \quad \mbox{for all functions}~~f: P\to \mathbb C.$$
By duality, it is clear that
$$ \|\widehat{g}\|_{L^2(P,d\sigma)} \le q^{\frac{1}{2}}\|g\|_{L^2( \mathbb F_q^d, dm)} \le q^{\frac{1}{2}}  \|G\|_{L^2( \mathbb F_q^d, dm)} = q^{\frac{1}{2}} |G|^{\frac{1}{2}},$$
where the last inequality follows from the property of the regular function $g$ (namely, $\frac{1}{2}\le |g|\le 1$ on its support $G$.)
\end{proof}

The following result is well known in \cite{MT04} (see also \cite{IK09}).

\begin{lemma}\label{explicit}
Let $d\sigma$ be the normalized surface measure on the paraboloid $P \subset (\mathbb F_q^d, d\xi).$
For each $m=(\underline{m}, m_d) \in {\mathbb F}_q^{d-1}\times {\mathbb F}_q$ , we have
$$ (d\sigma)^{\vee}(m)= \left\{\begin{array}{ll} q^{-(d-1)} \chi \left( \frac{\|\underline{m}\| }{-4m_d}\right)
\eta^{d-1}(m_d)\, G_1^{d-1}
 \quad &\mbox{if} \quad m_d \ne 0\\
0 \quad &\mbox{if} \quad m_d =0,\, m \ne (0,\dots,0)\\
1 \quad &\mbox{if} \quad m=(0,\ldots,0).\end{array}\right.,$$
where $\|\underline{m}\|:=m_1^2+\cdots+ m_{d-1}^2$, $\eta$ denotes the quadratic character of $\mathbb F_q^*$, and $ G_1$ denotes the standard Gauss sum with $|G_1|=|\sum\limits_{s\ne 0} \eta(s) \chi(s)|=q^{\frac{1}{2}}.$
\end{lemma}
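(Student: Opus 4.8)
The plan is to compute $(d\sigma)^\vee(m)$ directly from its definition by writing the paraboloid as a graph and reducing the resulting exponential sum to a product of classical one-variable Gauss sums. First I would observe that $|P|=q^{d-1}$, since $P$ is the graph of the map $\underline{\xi}\mapsto \|\underline{\xi}\|$ over $\mathbb F_q^{d-1}$, where $\underline{\xi}=(\xi_1,\ldots,\xi_{d-1})$ and $\|\underline{\xi}\|:=\xi_1^2+\cdots+\xi_{d-1}^2$. Writing $m=(\underline{m},m_d)$ and using $m\cdot \xi=\underline{m}\cdot\underline{\xi}+m_d\|\underline{\xi}\|$ for $\xi=(\underline{\xi},\|\underline{\xi}\|)\in P$, the definition of the inverse Fourier transform of the surface measure gives
$$(d\sigma)^\vee(m)=\frac{1}{q^{d-1}}\sum_{\underline{\xi}\in\mathbb F_q^{d-1}}\chi\left(m_d\|\underline{\xi}\|+\underline{m}\cdot\underline{\xi}\right).$$

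Next I would split into cases according to whether $m_d=0$. If $m_d=0$, the sum is $\sum_{\underline{\xi}}\chi(\underline{m}\cdot\underline{\xi})$, which by the orthogonality relation for $\chi$ equals $q^{d-1}$ when $\underline{m}=0$ and $0$ otherwise; since $m_d=0$, the first case is exactly $m=(0,\ldots,0)$, which yields the values $1$ and $0$ asserted in the last two lines of the formula. For $m_d\neq 0$, the key step is that the sum factors across coordinates,
$$\sum_{\underline{\xi}\in\mathbb F_q^{d-1}}\chi\left(m_d\|\underline{\xi}\|+\underline{m}\cdot\underline{\xi}\right)=\prod_{j=1}^{d-1}\sum_{\xi_j\in\mathbb F_q}\chi\left(m_d\xi_j^2+m_j\xi_j\right).$$
Completing the square in each factor (legitimate since $q$ is odd), $m_d\xi_j^2+m_j\xi_j=m_d\left(\xi_j+\frac{m_j}{2m_d}\right)^2-\frac{m_j^2}{4m_d}$, so after translating $\xi_j$ the $j$-th factor equals $\chi\left(\frac{-m_j^2}{4m_d}\right)\sum_{t\in\mathbb F_q}\chi(m_d t^2)$. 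Invoking the standard evaluation $\sum_{t\in\mathbb F_q}\chi(at^2)=\eta(a)G_1$ for $a\neq 0$, with $G_1=\sum_{s\neq 0}\eta(s)\chi(s)$ and $|G_1|=q^{1/2}$, and multiplying the $d-1$ factors, I obtain $\chi\left(\frac{\|\underline{m}\|}{-4m_d}\right)\eta^{d-1}(m_d)\,G_1^{d-1}$; dividing by $q^{d-1}=|P|$ produces exactly the stated expression.

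There is essentially no hard step here: this is a routine Gauss-sum computation, and the only points requiring care are the use of $q$ odd to complete the square and the bookkeeping of the quadratic character, namely that $\sum_{t}\chi(at^2)=\eta(a)\sum_t\chi(t^2)$ and $\sum_t\chi(t^2)=\sum_s(1+\eta(s))\chi(s)=G_1$ because $\sum_s\chi(s)=0$. Everything else follows by collecting terms.
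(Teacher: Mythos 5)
Your computation is correct and is exactly the standard argument: the paper states this lemma without proof (citing Mockenhaupt--Tao and Iosevich--Koh, where the identical graph-parametrization, completion of the square, and Gauss-sum evaluation $\sum_t \chi(at^2)=\eta(a)G_1$ appear). No gaps; the case analysis at $m_d=0$ and the bookkeeping of $\eta$ and $G_1$ are all handled properly.
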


When  a regular function $g$ is supported on a small set $G$, the following result will be useful to deduce a good $L^2$ restriction estimate.
\begin{lemma}\label{lem3.5} If $g$ is a regular function on $(\mathbb F_q^d, dm)$ with $\mbox{supp}(g)=G,$ then we have
$$\|\widehat{g}\|_{L^2(P, d\sigma)} \lesssim |G|^{\frac{1}{2}} + q^{\frac{-d+1}{4}} |G|.$$
\end{lemma}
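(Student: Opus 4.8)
The plan is to estimate $\|\widehat{g}\|_{L^2(P,d\sigma)}^2$ directly, by expanding the square and inserting the explicit formula for $(d\sigma)^\vee$ from Lemma \ref{explicit}. First I would write, using $\widehat{g}(\xi)=\sum_{m} g(m)\chi(-m\cdot\xi)$ and the identity $\frac{1}{|P|}\sum_{\xi\in P}\chi(n\cdot\xi)=(d\sigma)^\vee(n)$,
$$\|\widehat{g}\|_{L^2(P,d\sigma)}^2=\frac{1}{|P|}\sum_{\xi\in P}|\widehat{g}(\xi)|^2=\sum_{m,m'\in\mathbb F_q^d} g(m)\,\overline{g(m')}\,(d\sigma)^\vee(m'-m).$$

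Next I would split this double sum according to the last coordinate of $n=m'-m$. Lemma \ref{explicit} tells us that $(d\sigma)^\vee(0)=1$, that $(d\sigma)^\vee(n)=0$ whenever $n_d=0$ and $n\ne 0$, and that $|(d\sigma)^\vee(n)|=q^{-(d-1)}|G_1|^{d-1}=q^{-(d-1)/2}$ whenever $n_d\ne 0$. Hence every pair $m,m'$ with $m_d=m'_d$ but $m\ne m'$ contributes nothing, and
$$\|\widehat{g}\|_{L^2(P,d\sigma)}^2=\sum_{m\in G}|g(m)|^2+\sum_{m,m':\,m_d\ne m'_d} g(m)\,\overline{g(m')}\,(d\sigma)^\vee(m'-m).$$

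For the diagonal term I would use $|g|\le 1$ on $\mbox{supp}(g)=G$ to bound it by $|G|$. For the remaining term I would apply the triangle inequality together with the uniform bound $|(d\sigma)^\vee(n)|=q^{-(d-1)/2}$ on $\{n_d\ne 0\}$ and the crude estimate $\sum_{m}|g(m)|\le |G|$, giving
$$\Big|\sum_{m,m':\,m_d\ne m'_d} g(m)\,\overline{g(m')}\,(d\sigma)^\vee(m'-m)\Big|\le q^{-(d-1)/2}\Big(\sum_{m}|g(m)|\Big)^2\le q^{-(d-1)/2}|G|^2.$$
Combining the two contributions yields $\|\widehat{g}\|_{L^2(P,d\sigma)}^2\le |G|+q^{-(d-1)/2}|G|^2$, and taking square roots (via $\sqrt{a+b}\le\sqrt a+\sqrt b$) gives $\|\widehat{g}\|_{L^2(P,d\sigma)}\le |G|^{1/2}+q^{(-d+1)/4}|G|$, which is the claimed inequality, in fact with implied constant $1$.

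I do not expect a genuine obstacle here; the argument is a one-shot $L^2$ expansion. The single structural input that matters is the cancellation recorded in Lemma \ref{explicit}, namely that $(d\sigma)^\vee$ is supported off the hyperplane $\{n_d=0\}$ except at the origin: this is exactly what kills the contribution of pairs of points lying in a common level set $G_a$ and forces the off-diagonal error to be as small as $q^{-(d-1)/2}|G|^2$ rather than of order $|G|^2$. I would also note that regularity of $g$ plays no role in this particular estimate; only the bound $|g|\le 1$ on $\mbox{supp}(g)=G$ is used.
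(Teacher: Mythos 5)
Your proof is correct and is essentially the paper's own argument: both expand $\|\widehat{g}\|_{L^2(P,d\sigma)}^2$ into a double sum against $(d\sigma)^\vee(m'-m)$, isolate the diagonal contribution $|G|$, and bound the off-diagonal part by $q^{-(d-1)/2}|G|^2$ using the Gauss-sum size from Lemma \ref{explicit}. The only cosmetic difference is that the paper works with $P^\vee=\frac{1}{q}(d\sigma)^\vee$ and bounds all pairs $m\ne m'$ at once, whereas you separate out the hyperplane $\{n_d=0\}$ where $(d\sigma)^\vee$ vanishes; this extra observation is true but yields the same final bound.
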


\begin{proof} It follows that
\begin{align*}
\|\widehat{g}\|^2_{L^2(P, d\sigma)} &=\frac{1}{|P|} \sum_{\xi \in P} |\widehat{g}(\xi)|^2=\frac{1}{q^{d-1}} \sum_{\xi\in P} \sum_{m, m'\in G} \chi(\xi\cdot(m-m')) g(m) \overline{g(m')}\\
                                  &=q \sum_{m,m'\in G} {P}^\vee(m-m')g(m) \overline{g(m')} \le q \sum_{m, m'\in G} |{P}^\vee(m-m')|\\
                                  &=q \sum_{m\in G} |{P}^\vee(0,\ldots,0)| + q \sum_{m,m'\in G: m\ne m'} |{P}^\vee(m-m')|= \mbox{I} + \mbox{II}.
\end{align*}
Since ${P}^\vee(0,\ldots,0) =\frac{|P|}{q^d}=\frac{1}{q},$ we see that $\mbox{I}=|G|.$ To estimate $\mbox{II}$, we observe from Lemma \ref{explicit} that  if $w\ne (0,\ldots,0),$
$$ |{P}^\vee(w)| = \left|\frac{1}{q} \,(d\sigma)^\vee(w)\right|\le q^{\frac{-d-1}{2}}.$$
Then it is clear that $\mbox{II}\le q^{\frac{-d+1}{2}} |G|^2.$  Putting all estimates together, we obtain the lemma.
\end{proof}

The improved $L^p\to L^2$ restriction estimates for paraboloids have been obtained by extending the idea of Carbery \cite{Ca92} to the finite field setting.
For instance, Mockenhaupt and Tao \cite{MT04} observed that the restriction operator acting on a single vertical slice of g, say $g_a$ for $a\in \mathbb F_q,$
is closely related to the extension operator applied to a function $h$ on $P$, which can be identified with the slice function $g_a.$
In fact, they found the connection between the $L^p\to L^2$ restriction estimate and the $L^p\to L^4$ extension estimate  obtained from the additive energy estimation. Recall that the additive energy $\Lambda(E)$ for $E\subset P$ is given by
\begin{equation}\label{additive} \Lambda(E):= \sum_{x,y,z, w\in E: x+y=z+w} 1.\end{equation}
As a consequence, they obtained the extension result \eqref{Ta3D} for the 3-D paraboloid.
Working with the restriction operator applied to regular test functions,   M. Lewko \cite{LL13} was able to achieve the further improved extension results  for the 3-$D$ paraboloid (see \eqref{Lew3DG} and \eqref{Lew3D}).
He also employed the relation between the $L^p\to L^2$ restriction estimate and the $L^p\to L^4$ extension result for the 3-D paraboloid.
In this paper, we develop his work to higher dimensional cases.
To estimate $\|\widehat{g}\|_{L^2(P, d\sigma)}$,  we will invoke not only $L^p\to L^4$ extension results but also $L^2\to L^r$ extension results for paraboloids in higher dimensions.
The following lemma can be obtained by a modification of the Mockenhaupt and Tao Machinery which explains the relation between the $L^p\to L^2$ restriction estimate and the $L^p\to L^4$ extension result for paraboloids.

\begin{lemma}\label{key1} Let $P\subset \mathbb F_q^d$ be the paraboloid. Then the following statements hold:
\begin{enumerate}
\item Let $g$ be a regular function with the support $G\subset (\mathbb F_q^d, dm).$ For each $a\in L_G,$ let $h_a$ be a function on the paraboloid $P\subset (\mathbb F_q^d, d\xi)$ such that $\frac{1}{2} \le |h_a(\xi)|\le 1$ on
$\mbox{supp}(h_a)$ and $|\mbox{supp}(h_a)|= |G_a|.$ In addition, assume that there exists  a positive number $U(|E|)$ depending on the size of a set $E\subset P$ such that $|E|\sim |\mbox{supp}(h_a)|$ for all $a\in L_G$ and
\begin{equation}\label{assumption1} \max_{a\in L_G} \|(h_a d\sigma)^\vee\|_{L^4(F_q^d, dm)} \lesssim U(|E|).\end{equation} Then we have
$$ \|\widehat{g}\|_{L^2(P, d\sigma)} \lesssim |G|^{\frac{1}{2}} + |G|^{\frac{3}{8}}\, |L_G|^{\frac{1}{2}} q^{\frac{d-1}{4}} (U(|E|)^{\frac{1}{2}}.$$
\item  If $d\ge 4$ is even, or if $d=4\ell+3$ for $\ell\in \mathbb N$ and $ -1\in \mathbb F_q$ is not a square number, then
$$  \|\widehat{g}\|_{L^2(P, d\sigma)} \lesssim |G|^{\frac{d^2+d-1}{2d^2}}|L_G|^{\frac{1}{4}}$$
for all regular functions $g$ on $(\mathbb F_q^d, dm)$ with $\mbox{supp}(g)=G.$
\end{enumerate}
\end{lemma}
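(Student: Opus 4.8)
The plan is to prove (1) by a sharpened form of the Mockenhaupt--Tao slicing machinery and then to deduce (2) by inserting into (1) the best size-dependent $L^4$ extension bound available in the relevant dimensions. For (1) I would first decompose $g=\sum_{a\in L_G}g_a$, and for each $a$ let $g_a^{\flat}(\underline m)=g(\underline m,a)$ be the corresponding function on $\mathbb F_q^{d-1}$. Parametrising $P$ by $\underline\xi\mapsto(\underline\xi,\|\underline\xi\|)$ and applying the $(d-1)$-dimensional Plancherel theorem gives
$$\|\widehat g\|_{L^2(P,d\sigma)}^{2}=\frac{1}{q^{d-1}}\sum_{a,a'\in L_G}\ \sum_{\underline\xi\in\mathbb F_q^{d-1}}\chi\big((a'-a)\|\underline\xi\|\big)\,\widehat{g_a^{\flat}}(\underline\xi)\,\overline{\widehat{g_{a'}^{\flat}}(\underline\xi)}.$$
The diagonal terms $a=a'$ collapse (Plancherel again) to $\|g\|_{L^2(\mathbb F_q^d,dm)}^{2}\le|G|$, which is the first summand $|G|^{1/2}$. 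For $a\ne a'$ I would set $c=a'-a$ and complete the square in $\underline\xi$: the inner sum becomes a product of $d-1$ quadratic Gauss sums of modulus $q^{(d-1)/2}$ times $\chi(-\|\underline m-\underline{m'}\|/(4c))$. Splitting $\|\underline m-\underline{m'}\|=\|\underline m\|-2\underline m\cdot\underline{m'}+\|\underline{m'}\|$ and reading the $\underline{m'}$-sum as a Fourier transform, one then identifies, for each pair $a\ne a'$,
$$S_{a,a'}:=\sum_{\underline m,\underline{m'}}g_a^{\flat}(\underline m)\,\overline{g_{a'}^{\flat}(\underline{m'})}\,\chi\!\Big(-\tfrac{\|\underline m-\underline{m'}\|}{4c}\Big)\;=\;q^{d-1}\sum_{\underline m}u_{a,a'}(\underline m)\,(h_{a'}d\sigma)^{\vee}\!\Big(\tfrac{\underline m}{2c},\,-\tfrac{1}{4c}\Big),$$
where $u_{a,a'}(\underline m)=g_a^{\flat}(\underline m)\chi(-\|\underline m\|/(4c))$ (so $|u_{a,a'}|\le1$ on $\mathrm{supp}(g_a^{\flat})$), and $h_{a'}$ is the lift to $P$ of $\overline{g_{a'}^{\flat}}$, i.e.\ a function on $P$ of modulus $\sim 1$ on a support of size $|G_{a'}|\sim|E|$, so \eqref{assumption1} applies to it. Up to a constant of modulus $q^{-(d-1)/2}$, the off-diagonal part of $\|\widehat g\|_{L^2(P,d\sigma)}^{2}$ then equals $\sum_{a\ne a'}\eta^{d-1}(a'-a)S_{a,a'}$.

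The crucial point is how to sum $|S_{a,a'}|$. Fix $a'$ and let $a$ range over $L_G\setminus\{a'\}$: the scalar $c=a'-a$ varies, so the last coordinate $-1/(4c)$ of the evaluation point varies too; hence the points $(\underline m/(2c),-1/(4c))$ produced by different $a$ lie in different affine hyperplanes, and within a fixed $a$ the map $\underline m\mapsto\underline m/(2c)$ is injective. So as $a\ne a'$ and $\underline m\in\mathrm{supp}(g_a^{\flat})$ vary, these points form a set of at most $\sum_{a\in L_G}|G_a|=|G|$ \emph{distinct} points, and after bounding $|u_{a,a'}|\le1$ I would apply H\"older on this one set:
$$\sum_{a\ne a'}|S_{a,a'}|\;\le\;q^{d-1}\sum_{a\ne a'}\ \sum_{\underline m\in\mathrm{supp}(g_a^{\flat})}\big|(h_{a'}d\sigma)^{\vee}(\cdot)\big|\;\le\;q^{d-1}|G|^{3/4}\,\|(h_{a'}d\sigma)^{\vee}\|_{L^4(\mathbb F_q^d,dm)}\;\lesssim\;q^{d-1}|G|^{3/4}U(|E|).$$
Summing over the $|L_G|$ choices of $a'$ and restoring the $q^{-(d-1)/2}$ factor gives $\|\widehat g\|_{L^2(P,d\sigma)}^{2}\lesssim|G|+q^{(d-1)/2}|L_G|\,|G|^{3/4}U(|E|)$, and a square root yields precisely the asserted inequality. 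This \emph{global} (rather than pair-by-pair) use of H\"older is the one delicate point: estimating each $S_{a,a'}$ separately would only give the weaker $q^{(d-1)/4}|L_G|^{5/8}|G|^{3/8}U(|E|)^{1/2}$, and it is the concavity of $t\mapsto t^{3/4}$ — usable because the relevant points occupy distinct hyperplanes — that recovers the correct power $|L_G|^{1/2}$.

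For (2) I would apply (1) with the lifts $h_a$ above and take $U(|E|)=q^{(4-3d)/4}\max_{a\in L_G}\Lambda(\mathrm{supp}(h_a))^{1/4}$, which makes \eqref{assumption1} hold by the standard computation $\|(h\,d\sigma)^{\vee}\|_{L^4}^{4}\lesssim q^{4-3d}\Lambda(\mathrm{supp}(h))$; here $|\mathrm{supp}(h_a)|\sim|E|\sim|G|/|L_G|$. In the admissible dimensions — $d\ge4$ even, or $d=4\ell+3$ with $-1$ a non-square — Corollary \ref{SubP} shows that $P$ contains no subspace of dimension $\ge(d-1)/2$, and this is exactly the hypothesis under which the additive energy $\Lambda(E)$ of subsets $E\subset P$ admits a bound beating the trivial $\Lambda(E)\lesssim q^{d-2}|E|^2$ (equivalently, beating Stein--Tomas); the resulting size-dependent $L^4$ extension estimate is the one underlying the improvements of Iosevich--Koh \cite{IK09} and A.\ Lewko--M.\ Lewko \cite{LL10}. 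Feeding that $U(|E|)$ into the bound from (1) controls $\|\widehat g\|_{L^2(P,d\sigma)}$ by $|G|^{(d^2+d-1)/(2d^2)}|L_G|^{1/4}$ once $|G|$ is below a suitable threshold; above that threshold the slicing is wasteful and one simply invokes Lemma \ref{lem3.3}, $\|\widehat g\|_{L^2(P,d\sigma)}\le q^{1/2}|G|^{1/2}$, which is already within the claimed bound there, so that choosing the threshold to equalise the two estimates forces the exponent $(d^2+d-1)/(2d^2)$. I expect the real work in (2) to be purely bookkeeping: the additive-energy bound takes different shapes on several subranges of $|E|$, and one must verify that a single threshold works uniformly over all admissible pairs $(|G|,|L_G|)$; nothing beyond (1), Lemma \ref{lem3.3}, Corollary \ref{SubP}, and the known fourth-moment/additive-energy estimates is needed.
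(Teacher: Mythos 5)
Your argument for part (1) is correct, and it reaches the same estimate as the paper by an equivalent but differently packaged route: you expand $\|\widehat g\|_{L^2(P,d\sigma)}^2$ bilinearly, separate the diagonal, and evaluate the off-diagonal Gauss sums directly, whereas the paper writes $\|\widehat g\|_{L^2(P,d\sigma)}^2=\langle g,g\ast\delta_0\rangle+\langle g,g\ast K\rangle$ with the Bochner--Riesz kernel $K=(d\sigma)^\vee-\delta_0$, applies H\"older with $\|g\|_{L^{4/3}}\le|G|^{3/4}$, and bounds $\|g\ast K\|_{L^4}\le\sum_a\|g_a\ast K\|_{L^4}\le q^{(d-1)/2}\sum_a\|(h_ad\sigma)^\vee\|_{L^4}$ after the same change of variables. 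Your ``global H\"older over distinct evaluation points'' plays exactly the role of the paper's single application of H\"older to $\langle g,g\ast K\rangle$; both avoid the lossy $|L_G|^{5/8}$ and land on $|G|^{3/8}|L_G|^{1/2}q^{(d-1)/4}U(|E|)^{1/2}$.

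Part (2) is where the proposal has a genuine gap: you cannot derive it from part (1) plus the additive-energy ($L^4$) estimates plus Lemma \ref{lem3.3}. Statement (2) must hold for \emph{all} regular $g$, with no restriction on the slice sizes, and the energy bounds of Lemma \ref{key} are only favorable when $|E|=|G|/|L_G|$ is at most about $q^{(d+2)/2}$ (resp.\ $q^{(d+1)/2}$); for larger slices the dominant term is $\Lambda(E)\lesssim q^{-1}|E|^3$, and feeding this into (1) gives $\|\widehat g\|_{L^2(P,d\sigma)}^2\lesssim|G|^{3/2}|L_G|^{1/4}q^{(1-d)/4}$, which exceeds the target $|G|^{(d^2+d-1)/d^2}|L_G|^{1/2}$ unless $|G|\lesssim q^{d^2(d-1)/(2(d^2-2d+2))}|L_G|^{d^2/(2(d^2-2d+2))}$. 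Concretely, for $d=6$, $|L_G|=1$ and $|G|=q^{5}$ (a full hyperplane slice), your route gives at best $q^{25/8}$ from (1) and $q^{3}$ from Lemma \ref{lem3.3}, while the claimed bound is $q^{205/72}\approx q^{2.85}$; no choice of threshold closes this window. This large-slice regime is precisely the one for which part (2) is invoked in the proof of Theorem \ref{main2} (Case 1 with $|L_G|\le q^{1/3}$), so it cannot be discarded. The missing ingredient is the Lewko--Lewko endpoint estimate $R_P^*\bigl(2\to\frac{2d^2}{d^2-2d+2}\bigr)\lesssim1$ (Lemma \ref{LLL}): the paper proves (2) by running the slicing identity \eqref{L2formula} with $r=\frac{2d^2}{d^2-2d+2}$ instead of $r=4$, bounding $\|(h_ad\sigma)^\vee\|_{L^r}\lesssim\|h_a\|_{L^2(P,d\sigma)}$, and then applying Cauchy--Schwarz in $a$ together with $\sum_a\|h_a\|_{L^2(P,d\sigma)}^2\le q^{-(d-1)}|G|$; the exponent $\frac{d^2+d-1}{2d^2}=\frac{1}{2r'}+\frac14$ comes from this $r$, not from equalizing an $L^4$-based bound against Lemma \ref{lem3.3}. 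You would need to add this $L^2\to L^r$ input (and the general-$r$ form of the slicing inequality) to make part (2) go through.
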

\begin{proof} By duality, it follows that
$$  \|\widehat{g}\|^2_{L^2(P, d\sigma)} = <g,\, (\widehat{g} d\sigma)^\vee> =<g,\, g\ast (d\sigma)^\vee>.$$
Using the Bochner-Riesz kernel $K$ which is defined by
$K(m)= (d\sigma)^\vee(m) -\delta_0(m)$ for $m\in (\mathbb F_q^d, dm),$ where $\delta_0(m)=1$ if $m=(0, \ldots,0)$ and $0$ otherwise,
we can write from H\"{o}lder's inequality that for $1\le r\le \infty,$
\begin{align} \label{L2ofg} \|\widehat{g}\|^2_{L^2(P, d\sigma)} &= <g, \, g\ast \delta_0> + <g,\,g\ast K> \\
\nonumber &\le\|g\|^2_{L^2(\mathbb F_q^d, dm)} + \|g\|_{L^{r'}(\mathbb F_q^d, dm)} \,\| g\ast K\|_{L^r(\mathbb F_q^d, dm)}\\
\nonumber &\le  |G| + |G|^{\frac{1}{r'}} \, \| g\ast K\|_{L^r(\mathbb F_q^d, dm)},\end{align}
where the last inequality follows from the property of a regular function $g$ with $\frac{1}{2}\le g\le 1$ on its support $G.$
To estimate $\| g\ast K\|_{L^r(\mathbb F_q^d, dm)},$ define $g_a$ for $a\in L_G$ as the restriction of $g$ to the hyperplane $\{m=(m_1, \ldots, m_d)\in \mathbb F_q^d: m_d=a\}.$
Notice that $\mbox{supp}(g_a)=G_a$ for $a\in L_G.$ It follows that
\begin{equation} \label{mLG}\| g\ast K\|_{L^r(\mathbb F_q^d, dm)} \le \sum_{a\in L_G} \|g_a\ast K\|_{L^r(\mathbb F_q^d, dm)}.\end{equation}
By the definition of $K$ and Lemma \ref{explicit}, we see that for each $a\in L_G,$
\begin{align*} \|g_a\ast K\|_{L^r(\mathbb F_q^d, dm)} &= \left(\sum_{m\in \mathbb F_q^d} \left| \sum_{n\in \mathbb F_q^d} g_a(n) K(m-n)\right|^r\right)^{\frac{1}{r}}\\
 &= q^{\frac{-d+1}{2}} \left( \sum_{\underline{m} \in \mathbb F_q^{d-1}} \sum_{m_d\ne a} \left| \sum_{\underline{n}\in \mathbb F_q^{d-1}} g(\underline{n}, a)
  \,\chi\left(\frac{\|\underline{m}-\underline{n}\|}{-4(m_d-a)}\right)\right|^r\right)^{\frac{1}{r}},
  \end{align*}
where we define $\|\underline{m}-\underline{n}\|=(\underline{m}-\underline{n})\cdot (\underline{m}-\underline{n}).$ 
After changing variables by letting $s=-m_d+a,$ we use the change of variables one more by putting
$ t=\frac{1}{4s}$ and $\underline{u}=\frac{-\underline{m}}{2s}.$ Then it follows that
\begin{align*}\|g_a\ast K\|_{L^r(\mathbb F_q^d, dm)}&=q^{\frac{-d+1}{2}}\left( \sum_{\underline{u} \in \mathbb F_q^{d-1}} \sum_{t\ne 0} \left|\chi\left(\frac{\underline{u}\cdot \underline{u} }{4t} \right) \sum_{\underline{n}\in \mathbb F_q^{d-1}} g(\underline{n}, a)
 \,\chi\left((\underline{u}\cdot \underline{n})+t \,\underline{n}\cdot \underline{n}) \right)\right|^r \right)^{\frac{1}{r}}\\
&=q^{\frac{-d+1}{2}}\left( \sum_{\underline{u} \in \mathbb F_q^{d-1}} \sum_{t\ne 0} \left| \sum_{\underline{n}\in \mathbb F_q^{d-1}} g(\underline{n}, a)
 \,\chi\left((\underline{u}, t)\cdot (\underline{n}, \,\underline{n}\cdot \underline{n}) \right)\right|^r \right)^{\frac{1}{r}}.\end{align*}
Now, for each $a\in L_G,$ define $h_a$ as a function on the paraboloid $P$ given by
\begin{equation}\label{relation} h_a(\underline{n}, \,\underline{n}\cdot \underline{n}) = g_a(n) =g(\underline{n}, a) \quad \mbox{for}~~ n=(\underline{n}, n_d) \in \mathbb F_q^{d-1} \times \mathbb F_q.\end{equation}
Then we see that for each $a\in L_G,$
$$\|g_a\ast K\|_{L^r(\mathbb F_q^d, dm)} \le q^{\frac{d-1}{2}} \| (h_a d\sigma)^\vee\|_{L^r(\mathbb F_q^d, dm)}.$$
Hence, combining this with \eqref{mLG}, the inequality \eqref{L2ofg} implies that
\begin{equation}\label{L2formula}
\|\widehat{g}\|_{L^2(P, d\sigma)} \lesssim |G|^{\frac{1}{2}} + |G|^{\frac{1}{2r'}} q^{\frac{d-1}{4}} \left(\sum_{a\in L_G} \|(h_ad\sigma)^\vee \|_{L^r(\mathbb F_q^d, dm)}\right)^{\frac{1}{2}}.
\end{equation}
\subsection{Proof of the statement (1) in Lemma \ref{key1}}
Since $g$ is a regular function supported on the regular set $G,$ it is clear from the definition of $h_a$ that  $\frac{1}{2}\le |h_a(\xi)|\le 1$
on $\mbox{supp}(h_a)$ and  $|\mbox{supp}(h_a)|=|\mbox{supp}(g_a)|=|G_a|$ for $a\in L_G.$ Thus,  using the assumption \eqref{assumption1} with $r=4$, the inequality \eqref{L2formula} gives the desirable conclusion.
\subsection{Proof of the statement (2) in Lemma \ref{key1}}
We shall appeal the following $L^2\to L^r$ extension result obtained by A. Lewko and M. Lewko (see Theorem 2 in \cite{LL10}).
\begin{lemma}\label{LLL} Let $P$ be the paraboloid in $(\mathbb F_q^d, d\xi).$ If $d\ge 4$ is even, or if $d=4\ell+3$ for $\ell\in \mathbb N$ and $ -1\in \mathbb F_q$ is not a square number, then we have
$$ R^*_P\left(2\to \frac{2d^2}{d^2-2d+2}\right) \lesssim 1.$$
\end{lemma}
Applying this lemma to the inequality \eqref{L2formula} with $r=\frac{2d^2}{d^2-2d+2},$ it follows
$$ \|\widehat{g}\|_{L^2(P, d\sigma)} \lesssim |G|^{\frac{1}{2}} + |G|^{\frac{d^2+2d-2}{4d^2}} q^{\frac{d-1}{4}} \left(\sum_{a\in L_G} \|h_a \|_{L^2(P, d\sigma)}\right)^{\frac{1}{2}}.$$
By the Cauchy-Schwarz inequality and the definition of $h_a$ given in \eqref{relation}, we conclude that
\begin{align*}
 \|\widehat{g}\|_{L^2(P, d\sigma)} &\lesssim |G|^{\frac{1}{2}} + |G|^{\frac{d^2+2d-2}{4d^2}} q^{\frac{d-1}{4}} |L_G|^{\frac{1}{4}}  \left( \sum_{a\in L_G}  \|h_a \|^2_{L^2(P, d\sigma)}\right)^{\frac{1}{4}}\\
  &=|G|^{\frac{1}{2}} + |G|^{\frac{d^2+2d-2}{4d^2}} q^{\frac{d-1}{4}} |L_G|^{\frac{1}{4}}  \left( \sum_{a\in L_G} \frac{1}{q^{d-1}} \sum_{n\in P} |h_a(n)|^2 \right)^{\frac{1}{4}}\\
  &= |G|^{\frac{1}{2}} + |G|^{\frac{d^2+2d-2}{4d^2}} |L_G|^{\frac{1}{4}} \left( \sum_{a\in L_G}\sum_{n\in \mathbb F_q^d} |g_a(n)|^2 \right)^{\frac{1}{4}}\\
  &= |G|^{\frac{1}{2}} + |G|^{\frac{d^2+2d-2}{4d^2}} |L_G|^{\frac{1}{4}} \left(\sum_{n\in \mathbb F_q^d} |g(n)|^2 \right)^{\frac{1}{4}}\\
  &\le |G|^{\frac{1}{2}} + |G|^{\frac{d^2+2d-2}{4d^2}} |L_G|^{\frac{1}{4}} |G|^{\frac{1}{4}} \lesssim |G|^{\frac{d^2+d-1}{2d^2}}|L_G|^{\frac{1}{4}},
\end{align*}
where the last line follows because $ \frac{1}{2} \le |g(n)| \le 1$ on its support $G.$
\end{proof}

\section{Proof of main theorems}
First, let us see basic ideas to deduce our main results.
We want to improve  Lemma \ref{LLL} which is the previously best known result on extension problems for paraboloids in higher dimensions. By duality, Lemma \ref{LLL} implies the following restriction estimate:
\begin{equation}\label{oldresult} \|\widehat{g}\|_{L^2(P, d\sigma)} \lesssim
\|g\|_{L^{\frac{2d^2}{d^2+2d-2}}(\mathbb F_q^d, dm)}.\end{equation}
Now let us only consider the regular function $g$ on its support $G.$
Since $\|g\|_{L^p(\mathbb F_q^d, dm)} \sim |G|^{\frac{1}{p}}$,
when $|G|$ is much bigger than $q^{\frac{d^2}{2d-2}}$,  Lemma \ref{lem3.3} already gives us a better result than \eqref{oldresult}.  On the other hand, 
when $|G|$ is very small, Lemma \ref{lem3.5} yields very strong results.
Therefore, our main task is to obtain much better estimate than \eqref{oldresult} for every set $G$ with $q^{\frac{d^2}{2d-2}-\delta} \le |G| \le q^{\frac{d^2}{2d-2}+\varepsilon}$ for some $\delta,\,\varepsilon>0.$
This will be successfully done by applying  Lemma \ref{key1}. 
In practice, we need to find a $U(|E|)$ in the conclusion of the first part of Lemma \ref{key1}. To do this,  we shall invoke the following additive energy estimates due to Iosevich and Koh (see Lemma 7, Lemma 8, and Remark 4 in \cite{IK09}).
\begin{lemma}\label{key} Let $P$ be the paraboloid in $({\mathbb F}_q^d , d\xi).$
Then the following statements hold:
\begin{enumerate}
\item If the dimension $d\ge 4$ is even and $E\subset P$, then we have
$$\Lambda(E) \lesssim \min \{ |E|^3,~~ q^{-1}|E|^3+q^{\frac{d-2}{4}}|E|^{\frac{5}{2}} + q^{\frac{d-2}{2}}|E|^2 \}$$

\item If $d=4\ell+3$ for $\ell\in \mathbb N$, and $ -1\in \mathbb F_q$ is not a square number, then we have
$$\Lambda_4(E) \lesssim \min \{ |E|^3,~~ q^{-1}|E|^3+q^{\frac{d-3}{4}}|E|^{\frac{5}{2}} + q^{\frac{d-2}{2}}|E|^2 \},$$
where $\Lambda(E)$ denotes the additive energy defined as in \eqref{additive}.
\end{enumerate}
\end{lemma}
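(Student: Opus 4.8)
The statement to establish is the additive-energy bound for subsets of the paraboloid, and I would treat both parts uniformly by a character-sum computation, the dimension hypotheses entering only through the type of the quadratic form $x_1^2+\cdots+x_{d-1}^2$. First I would pass to the base hyperplane: since $\pi\colon P\to\mathbb F_q^{d-1},\ (\underline\xi,\underline\xi\cdot\underline\xi)\mapsto\underline\xi$ is a bijection, set $A:=\pi(E)$, so $|A|=|E|$, and write $|\underline\xi|^2:=\xi_1^2+\cdots+\xi_{d-1}^2$. Eliminating $\underline w$ from the two relations defining $x+y=z+w$ on $P$ yields
$$\Lambda(E)=\#\big\{(\underline x,\underline y,\underline z,\underline w)\in A^4:\ \underline x+\underline y=\underline z+\underline w,\ (\underline z-\underline x)\cdot(\underline z-\underline y)=0\big\},$$
and the bound $\Lambda(E)\lesssim|E|^3$ is then immediate, because $\Lambda(E)\le\sum_{\underline n}r(\underline n)^2$ where $r(\underline n):=\#\{(\underline x,\underline y)\in A^2:\underline x+\underline y=\underline n\}$ satisfies $\max_{\underline n}r(\underline n)\le|E|$ and $\sum_{\underline n}r(\underline n)=|E|^2$. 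Everything below concerns the arithmetic bound.

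For the arithmetic bound I would detect the quadratic constraint by an additive character. Since $q$ is odd, setting $\underline n=\underline x+\underline y$ and substituting $\underline u=\underline x-\tfrac{\underline n}{2}$, $\underline v=\underline z-\tfrac{\underline n}{2}$ turns $(\underline z-\underline x)\cdot(\underline z-\underline y)=0$ into $|\underline u|^2=|\underline v|^2$, so
$$\Lambda(E)=\frac1q\sum_{s\in\mathbb F_q}\ \sum_{\underline n\in\mathbb F_q^{d-1}}\Big|\sum_{\underline u:\ \frac{\underline n}{2}\pm\underline u\in A}\chi\big(s|\underline u|^2\big)\Big|^2 .$$
The $s=0$ term is exactly $q^{-1}\sum_{\underline n}r(\underline n)^2\le q^{-1}|E|^3$, the conjectural main term. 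For each fixed $s\ne0$ I would expand the two indicators $1_A\!\big(\tfrac{\underline n}{2}\pm\underline u\big)$ by Fourier inversion on $\mathbb F_q^{d-1}$; then the $\underline u$-summation is a complete diagonal Gauss sum $\sum_{\underline u}\chi\big(s|\underline u|^2+\underline\ell\cdot\underline u\big)$, evaluated exactly by the identity behind Lemma \ref{explicit} (it equals $\eta(s)^{d-1}G_1^{d-1}\chi\big(-|\underline\ell|^2/(4s)\big)$, of modulus $q^{(d-1)/2}$, with $G_1^2=\eta(-1)q$). Carrying out the $\underline n$-sum (which forces a linear relation among the dual variables) and then the $s$-sum reduces the $s\ne0$ contribution to the quantity $q^{-2(d-1)}\Sigma_1$, where $\Sigma_1:=\sum_{\underline\delta}\sum_{c}\big|\sum_{|\underline\gamma|^2=c}\widehat{1_A}(\tfrac{\underline\delta+\underline\gamma}{2})\,\widehat{1_A}(\tfrac{\underline\delta-\underline\gamma}{2})\big|^2\ge0$, together with an explicit lower-order term of size $q^{d-2}|E|$ that must be retained (it will cancel the leading part of $\Sigma_1$, not be discarded).

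Bounding $\Sigma_1$ is the core of the argument, and I would do it by iterating the same device: expanding the square and detecting $|\underline\gamma|^2=|\underline\gamma'|^2$ by a second character-parameter, then Fourier-expanding the four factors $\widehat{1_A}$ and evaluating the resulting complete Gauss sum in the $\underline\gamma$-direction by Lemma \ref{explicit}. The diagonal $\underline\gamma=\underline\gamma'$ contributes $\big(\sum_{\underline\alpha}|\widehat{1_A}(\underline\alpha)|^2\big)^2=q^{2(d-1)}|E|^2$ by Plancherel, i.e. only $|E|^2$ after the factor $q^{-2(d-1)}$; the off-diagonal produces a leading term that cancels the explicit $q^{d-2}|E|$ above, plus a residue controlled using only $\|\widehat{1_A}\|_\infty\le|E|$ and $\sum_{\underline\alpha}|\widehat{1_A}(\underline\alpha)|^2=q^{d-1}|E|$, by Cauchy--Schwarz and summation of a geometric series. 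The size of this residue is governed by the point counts $\#\{\underline x\in\mathbb F_q^{d-1}:|\underline x|^2=c\}$, which depend on the type of the form: for $d$ even, $d-1$ is odd and the form is of split type ($\#\{|\underline x|^2=c\}=q^{d-2}$ for $c=0$ and $q^{d-2}\pm q^{(d-2)/2}$ for $c\ne0$), which yields the residual term $q^{\frac{d-2}{4}}|E|^{5/2}$; for $d=4\ell+3$ with $-1$ a non-square, $(d-1)/2=2\ell+1$ is odd and $\eta(-1)=-1$, so $\eta(-1)^{(d-1)/2}=-1$, the form is of non-split type, the exceptional level set $\{|\underline x|^2=0\}$ is smaller by a power of $q$, and this gain sharpens the term to $q^{\frac{d-3}{4}}|E|^{5/2}$. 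In either case a further contribution $q^{\frac{d-2}{2}}|E|^2$ remains. Combining $q^{-1}|E|^3$, $|E|^2$, $q^{\frac{d-2}{4}}|E|^{5/2}$ (resp. $q^{\frac{d-3}{4}}|E|^{5/2}$) and $q^{\frac{d-2}{2}}|E|^2$, and taking the minimum with $|E|^3$, gives the two stated bounds.

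The step I expect to be the main obstacle is precisely this iterated \emph{detect-then-evaluate-Gauss-sum} analysis. One has to organize the several nested character sums so that at each stage the variable carrying a nondegenerate diagonal quadratic phase ranges over all of $\mathbb F_q^{d-1}$ (so that a genuinely complete Gauss sum is available, with the full square-root gain), while the surviving sums see $1_A$ only through $\|\widehat{1_A}\|_\infty$ and $\sum_{\underline\alpha}|\widehat{1_A}(\underline\alpha)|^2$ --- which is unavoidable, since $A=\pi(E)$ is an arbitrary subset of $\mathbb F_q^{d-1}$ with no further exploitable structure --- and, crucially, one must track the leading terms and their cancellations carefully enough that $\Lambda(E)$ does not reappear on the right-hand side with a coefficient $\ge 1$. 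Getting the exponent of $q$ in each residual term exactly right, and distinguishing the split and non-split cases (where the hypotheses ``$d$ even'' and ``$d=4\ell+3$, $-1$ a non-square'' actually enter, through $\eta(-1)^{(d-1)/2}$, in parallel with the count in Lemma \ref{Vi}), is the delicate part; once the correct power of $q$ has been isolated in the residue, the remaining estimates are routine applications of Plancherel, Cauchy--Schwarz, and geometric summation.
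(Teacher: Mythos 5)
You should first be aware that the paper does not prove this lemma: it is imported wholesale from Iosevich--Koh \cite{IK09} (Lemma 7, Lemma 8 and Remark 4 there), so your argument can only be compared with that source. Your preliminary reductions are correct and do match the spirit of \cite{IK09}: the projection to the base, the identity
$\Lambda(E)=\frac1q\sum_{s}\sum_{\underline n}\bigl|\sum_{\underline u:\,\underline n/2\pm\underline u\in A}\chi(s\|\underline u\|)\bigr|^2$,
the extraction of the main term $q^{-1}\sum_{\underline n}r(\underline n)^2\le q^{-1}|E|^3$ from $s=0$, the trivial bound $|E|^3$, and the identification of where the hypotheses enter (through $\eta(-1)^{(d-1)/2}$ and the null variety of the form, in parallel with Lemma \ref{Vi}) are all sound.

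The genuine gap is that the one step carrying the content of the lemma --- the bound on $\Sigma_1$ --- is not an estimate but a circle. Carrying out exactly what you prescribe (detect $\|\underline\gamma\|=\|\underline\gamma'\|$ with a parameter $t$, expand the four factors $\widehat{1_A}$, evaluate the complete Gauss sum in $\underline\gamma$, sum over $\underline\delta$ and then over $t$), one finds that the $t=0$ term of $\Sigma_1$ equals $q^{3d-4}|A|$, which cancels your retained $-q^{d-2}|E|$ as you predict, while the $t\ne 0$ terms, after the $\underline\delta$-sum forces $\underline x+\underline y=\underline x'+\underline y'$ and the $t$-sum detects $\|\underline x-\underline y\|=\|\underline x'-\underline y'\|$ (which, by the parallelogram law, is exactly the paraboloid energy condition), contribute precisely $q^{2d-2}\Lambda(E)-q^{2d-3}\Lambda(A)$. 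Substituting back, the entire computation collapses to the tautology $\Lambda(E)=\Lambda(E)$. You flag this danger yourself, but you supply no mechanism to escape it: ``Cauchy--Schwarz and summation of a geometric series'' is not such a mechanism (there is no geometric series in these complete character sums), and nowhere in the sketch is the decisive exponent $q^{\frac{d-2}{4}}$ --- a fourth root of $q$, which must arise from a specific lossy step such as a geometric mean of the trivial bound with a null-cone bound, or two nested applications of Cauchy--Schwarz --- actually produced; likewise the improvement to $q^{\frac{d-3}{4}}$ in the non-split case is attributed to the smaller null level set without exhibiting the inequality in which that gain appears. To break the circularity one needs a genuinely one-directional estimate, e.g.\ the level-set decomposition used in \cite{IK09}, in which $\nu(\xi)$ is bounded by $|A\cap S_j|$ for spheres $S_j\subset\mathbb F_q^{d-1}$ and the zero-radius sphere --- whose Fourier transform is large exactly on the null cone, where the split/non-split dichotomy lives --- is treated separately. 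As written, your proposal contains the correct skeleton but none of the estimates that the lemma actually asserts.
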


As we shall see, we only need the upper bound of $\Lambda(E)$ for a restricted range of $E \subset P.$ Considering the dominating value in terms of $|E|$, the following result is a simple corollary of the lemma above.

\begin{corollary}\label{cor1}
For the paraboloid $P \subset (\mathbb F_q^d, d\xi),$  we have the following facts:
\begin{enumerate}
\item If the dimension $d\ge 4$ is even and $E$ is any subset of $P$ with $q^{\frac{d-2}{2}} \le |E|\le q^{\frac{d+2}{2}},$ then
$$\Lambda(E) \lesssim q^{\frac{d-2}{4}}|E|^{\frac{5}{2}}$$

\item Suppose that $d=4\ell+3$ for $\ell\in \mathbb N$, and $ -1\in \mathbb F_q$ is not a square number.
Then, for any subset $E$ of $P$ with $q^{\frac{d-2}{2}} \le |E|\le q^{\frac{d+1}{2}},$ we have
$$\Lambda(E) \lesssim q^{\frac{d-3}{4}}|E|^{\frac{5}{2}} + q^{\frac{d-2}{2}}|E|^2.$$
\end{enumerate}
\end{corollary}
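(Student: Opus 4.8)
The plan is to derive Corollary \ref{cor1} directly from Lemma \ref{key} by a routine comparison of the three monomials appearing in the min-bound there. Since the min in Lemma \ref{key} always allows us to discard the trivial bound $|E|^3$ in favour of the structured one, it suffices to determine, for $|E|$ in the stated range, which of $q^{-1}|E|^3$, $q^{\frac{d-2}{4}}|E|^{\frac52}$ (resp.\ $q^{\frac{d-3}{4}}|E|^{\frac52}$), and $q^{\frac{d-2}{2}}|E|^2$ is largest, and to check that the remaining terms are absorbed into it.

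For part (1), I would first observe that
$$ q^{-1}|E|^3 \le q^{\frac{d-2}{4}}|E|^{\frac52} \iff |E|^{\frac12} \le q^{\frac{d+2}{4}} \iff |E| \le q^{\frac{d+2}{2}},$$
which is precisely the upper bound in the hypothesis, so the cubic term is controlled by the middle term. Likewise,
$$ q^{\frac{d-2}{2}}|E|^2 \le q^{\frac{d-2}{4}}|E|^{\frac52} \iff q^{\frac{d-2}{4}} \le |E|^{\frac12} \iff q^{\frac{d-2}{2}} \le |E|,$$
which is exactly the lower bound in the hypothesis, so the quadratic term is also controlled by the middle term. Substituting both into the second bound of Lemma \ref{key}(1) yields $\Lambda(E) \lesssim q^{\frac{d-2}{4}}|E|^{\frac52}$.

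For part (2) the same strategy applies, but now I would retain both the $q^{\frac{d-3}{4}}|E|^{\frac52}$ term and the $q^{\frac{d-2}{2}}|E|^2$ term and only eliminate the cubic one: from
$$ q^{-1}|E|^3 \le q^{\frac{d-3}{4}}|E|^{\frac52} \iff |E| \le q^{\frac{d+1}{2}},$$
which is the upper bound in the hypothesis, the first term of Lemma \ref{key}(2) is dominated by the second, and the claimed bound $\Lambda(E) \lesssim q^{\frac{d-3}{4}}|E|^{\frac52} + q^{\frac{d-2}{2}}|E|^2$ follows immediately. The lower bound $q^{\frac{d-2}{2}}\le |E|$ plays no role in this implication; it is recorded only because it delimits the regime of $|E|$ in which the corollary will actually be invoked in Section 4.

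Since every step reduces to an elementary inequality between powers of $q$ and $|E|$, there is no genuine obstacle here. The only point requiring care is bookkeeping: matching each size restriction in the hypothesis to the term it is designed to absorb, and keeping in mind that in part (2) one deliberately does \emph{not} collapse the two surviving terms into a single one, since both are genuinely needed later.
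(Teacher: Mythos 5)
Your proposal is correct and is exactly the computation the paper intends: the paper states Corollary \ref{cor1} as an immediate consequence of Lemma \ref{key} "considering the dominating value in terms of $|E|$," and your term-by-term comparisons (absorbing the cubic and quadratic terms into the $|E|^{5/2}$ term in part (1), and only the cubic term in part (2)) supply precisely the omitted bookkeeping. Your remark that the lower bound on $|E|$ is not actually needed in part (2) is also accurate.
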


We can deduce the following result by applying Corollary \ref{cor1} to the first part of Lemma \ref{key1}.
\begin{lemma} \label{lem4.3}
Let $g$ be a regular function with its support $G\subset (\mathbb F_q^d, dm).$ Then the following statements are valid:
\begin{enumerate}
\item If the dimension $d\ge 4$ is even and $q^{\frac{d-2}{2}} \lesssim |G_a| \lesssim q^{\frac{d+2}{2}}$ for $a\in L_G,$ then we have
 $$ \|\widehat{g}\|_{L^2(P, d\sigma)} \lesssim |G|^{\frac{1}{2}} + |G|^{\frac{11}{16}}\, |L_G|^{\frac{3}{16}} q^{\frac{-3d+6}{32}}$$
 \item
 Assume that $d=4\ell+3$ for $\ell\in \mathbb N$, and $ -1\in \mathbb F_q$ is not a square number. Then if
 $q^{\frac{d-2}{2}} \lesssim |G_a| \lesssim q^{\frac{d+1}{2}}$ for $a\in L_G$, we have
 $$ \|\widehat{g}\|_{L^2(P, d\sigma)} \lesssim |G|^{\frac{1}{2}} +
 |G|^{\frac{11}{16}} |L_G|^{\frac{3}{16}} q^{\frac{-3d+5}{32}}+|G|^{\frac{5}{8}} |L_G|^{\frac{1}{4}} q^{\frac{-d+2}{16}}.$$
\end{enumerate}
\end{lemma}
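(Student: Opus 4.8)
The plan is to apply the first part of Lemma \ref{key1}, and the only real work is to produce an admissible value of $U(|E|)$ for \eqref{assumption1}. This is done by converting the $L^4$ extension norm of $h_a$ on $P$ into an additive energy and then invoking Corollary \ref{cor1}.

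First I would record the standard identity linking the two. Expanding the fourth power and using $\sum_{m\in\mathbb F_q^d}\chi(m\cdot v)=q^d\,\delta_0(v)$ gives
\[\|(h_ad\sigma)^\vee\|^4_{L^4(\mathbb F_q^d,dm)}=\frac{q^d}{|P|^4}\sum_{\substack{\xi,\xi',\eta,\eta'\in P\\ \xi+\eta'=\xi'+\eta}}h_a(\xi)\overline{h_a(\xi')}\,\overline{h_a(\eta)}\,h_a(\eta').\]
Since $|h_a|\le 1$ on $\mathrm{supp}(h_a)$, $h_a$ vanishes off that set, and $|P|=q^{d-1}$, the right-hand side is at most $q^{-3d+4}\,\Lambda(\mathrm{supp}(h_a))$, so $\|(h_ad\sigma)^\vee\|_{L^4}\le q^{\frac{-3d+4}{4}}\Lambda(\mathrm{supp}(h_a))^{1/4}$.

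Next I would apply Corollary \ref{cor1} with $E:=\mathrm{supp}(h_a)$, which has cardinality $|G_a|$. In case (1) the hypothesis $q^{\frac{d-2}{2}}\lesssim|G_a|\lesssim q^{\frac{d+2}{2}}$ is exactly the range where Corollary \ref{cor1}(1) applies, giving $\Lambda(E)\lesssim q^{\frac{d-2}{4}}|G_a|^{5/2}$ and hence the admissible choice $U(|E|)=q^{\frac{-11d+14}{16}}|G_a|^{5/8}$. In case (2) the hypothesis $q^{\frac{d-2}{2}}\lesssim|G_a|\lesssim q^{\frac{d+1}{2}}$ together with Corollary \ref{cor1}(2) and the elementary inequality $(A+B)^{1/4}\le A^{1/4}+B^{1/4}$ yields $U(|E|)\lesssim q^{\frac{-11d+13}{16}}|G_a|^{5/8}+q^{\frac{-5d+6}{8}}|G_a|^{1/2}$. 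Regularity of $g$ guarantees $|G_a|\sim|G_{a'}|$ for all $a,a'\in L_G$, so this common cardinality legitimately serves as $|E|$ and all hypotheses of Lemma \ref{key1}(1) are met.

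Finally I would substitute into the conclusion
\[\|\widehat g\|_{L^2(P,d\sigma)}\lesssim|G|^{1/2}+|G|^{3/8}|L_G|^{1/2}q^{\frac{d-1}{4}}\,U(|E|)^{1/2}\]
of Lemma \ref{key1}(1) and replace $|G_a|$ by $|G|/|L_G|$ using $|G|\sim|G_a||L_G|$. A short exponent computation then collapses the second term to $|G|^{11/16}|L_G|^{3/16}q^{\frac{-3d+6}{32}}$ in case (1), and to $|G|^{11/16}|L_G|^{3/16}q^{\frac{-3d+5}{32}}+|G|^{5/8}|L_G|^{1/4}q^{\frac{-d+2}{16}}$ in case (2), which is precisely the claimed bound. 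The substantive point is merely that the cardinality restrictions on $|G_a|$ are exactly what make the corresponding parts of Corollary \ref{cor1} applicable to $\mathrm{supp}(h_a)$; the main obstacle, such as it is, is purely arithmetic, namely keeping straight the three competing powers of $q$ (the $q^{(d-1)/4}$ from Lemma \ref{key1}, the $q^{(-3d+4)/4}$ from the $L^4$ identity, and those coming out of Corollary \ref{cor1}) and checking that the $|G_a|^{5/8}$ and $|G_a|^{1/2}$ factors recombine with $|G|/|L_G|$ to reproduce the exponents $\tfrac{11}{16},\tfrac{3}{16}$ and $\tfrac58,\tfrac14$ as stated.
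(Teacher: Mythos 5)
Your proposal is correct and follows the paper's own argument essentially verbatim: bound $\|(h_ad\sigma)^\vee\|_{L^4}$ by $q^{\frac{-3d+4}{4}}\Lambda(\mathrm{supp}(h_a))^{1/4}$, invoke Corollary \ref{cor1} on $E=\mathrm{supp}(h_a)$ (whose size is $|G_a|$, admissible by the stated hypotheses), and feed the resulting $U(|E|)$ into Lemma \ref{key1}(1) with $|E|\sim|G|/|L_G|$. The exponent bookkeeping in both cases checks out exactly as claimed.
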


\begin{proof} For each $a\in L_G$, let $h_a$ be the function on $P$ given in the statement $(1)$ of Lemma \ref{key1}.
For each $a\in L_G,$ let $H_a=\mbox{supp}(h_a).$
Since $\frac{1}{2} \le |h_a|\le 1$ on its support $H_a,$  expanding $L^4$ norm of $(h_ad\sigma)^\vee$ gives
$$ \|(h_a d\sigma)^\vee\|_{L^4(F_q^d, dm)} \le \| (H_a d\sigma)^\vee\|_{L^4(F_q^d, dm)} = q^{\frac{-3d+4}{4}} (\Lambda(H_a))^{\frac{1}{4}}. $$
First, let us prove the first part of Lemma \ref{lem4.3}. Since $|G_a|=|H_a|$ for $a\in L_G,$  the first part of Corollary \ref{cor1} and the above inequality  yield
$$ \|(h_a d\sigma)^\vee\|_{L^4(F_q^d, dm)} \lesssim q^{\frac{-3d+4}{4}} \left(q^{\frac{d-2}{4}}|H_a|^{\frac{5}{2}}\right)^{\frac{1}{4}}=q^{\frac{-11d+14}{16}} |H_a|^{\frac{5}{8}}.$$
By the definition of a regular set $G,$ it is obvious that $|G_a|\sim |G_{a'}|$ for $a, a' \in L_G.$ Hence, $|H_a|\sim |H_{a'}|$ for $a, a' \in L_G.$ Thus, we can choose $E\subset P$ such that
$|E|\sim |H_a|$ for all $a\in L_G.$ It follows that
$$ \max_{a\in L_G} \|(h_a d\sigma)^\vee\|_{L^4(F_q^d, dm)} \lesssim q^{\frac{-11d+14}{16}} |E|^{\frac{5}{8}}:=U(|E|).$$
By applying the first part of Lemma \ref{key1} and observing that $|G|\sim |G_a||L_G|\sim |E||L_G|$ for all $a\in L_G,$ we conclude that
\begin{align*}\|\widehat{g}\|_{L^2(P, d\sigma)} &\lesssim |G|^{\frac{1}{2}} + |G|^{\frac{3}{8}}\, |L_G|^{\frac{1}{2}} q^{\frac{d-1}{4}} \left(q^{\frac{-11d+14}{16}} |E|^{\frac{5}{8}}\right)^{\frac{1}{2}}\\
&\sim |G|^{\frac{1}{2}} + |G|^{\frac{11}{16}}\, |L_G|^{\frac{3}{16}} q^{\frac{-3d+6}{32}},\end{align*}
which proves the first part of Lemma \ref{lem4.3}. \\

To prove the second part of Lemma \ref{lem4.3},  we use the same arguments as in the proof of the first part of Lemma \ref{lem4.3}. In this case, we just utilize the second part of Corollary \ref{cor1} to see that

\begin{align*} \max_{a\in L_G} \|(h_a d\sigma)^\vee\|_{L^4(F_q^d, dm)} &\lesssim q^{\frac{-3d+4}{4}}\left( q^{\frac{d-3}{4}}|E|^{\frac{5}{2}} + q^{\frac{d-2}{2}}|E|^2\right)^{\frac{1}{4}}\\
&\sim q^{\frac{-3d+4}{4}}\left( q^{\frac{d-3}{16}}|E|^{\frac{5}{8}} + q^{\frac{d-2}{8}}|E|^{\frac{1}{2}}\right)\\
&=q^{\frac{-11d+13}{16}}|E|^{\frac{5}{8}} + q^{\frac{-5d+6}{8}}|E|^{\frac{1}{2}}
:=U(|E|).\end{align*}

As before, we appeal the first part of Lemma \ref{key1} and use  that $|G|\sim |G_a||L_G|\sim |E||L_G|$ for all $a\in L_G.$ Then the proof of the second part of Lemma \ref{lem4.3} is complete as follows:
\begin{align*}
\|\widehat{g}\|_{L^2(P, d\sigma)} &\lesssim |G|^{\frac{1}{2}} + |G|^{\frac{3}{8}}\, |L_G|^{\frac{1}{2}} q^{\frac{d-1}{4}} \left(q^{\frac{-11d+13}{16}}|E|^{\frac{5}{8}} + q^{\frac{-5d+6}{8}}|E|^{\frac{1}{2}}\right)^{\frac{1}{2}}\\
                                  &\sim |G|^{\frac{1}{2}} + |G|^{\frac{3}{8}}\, |L_G|^{\frac{1}{2}} q^{\frac{d-1}{4}} \left(q^{\frac{-11d+13}{32}}|E|^{\frac{5}{16}} + q^{\frac{-5d+6}{16}}|E|^{\frac{1}{4}}   \right)\\
                                  &=|G|^{\frac{1}{2}} + |G|^{\frac{3}{8}}\, |L_G|^{\frac{1}{2}} q^{\frac{d-1}{4}} q^{\frac{-11d+13}{32}}|E|^{\frac{5}{16}} + |G|^{\frac{3}{8}}\, |L_G|^{\frac{1}{2}} q^{\frac{d-1}{4}}q^{\frac{-5d+6}
                                  {16}}|E|^{\frac{1}{4}}\\
                                  &=|G|^{\frac{1}{2}} + |G|^{\frac{3}{8}}\, |L_G|^{\frac{1}{2}}|E|^{\frac{5}{16}} q^{\frac{-3d+5}{32}} +   |G|^{\frac{3}{8}}\, |L_G|^{\frac{1}{2}}|E|^{\frac{1}{4}}q^{\frac{-d+2}{16}}\\
                                  &\sim |G|^{\frac{1}{2}} +|G|^{\frac{11}{16}}\, |L_G|^{\frac{3}{16}} q^{\frac{-3d+5}{32}} + |G|^{\frac{5}{8}}\, |L_G|^{\frac{1}{4}}q^{\frac{-d+2}{16}}.
\end{align*}
\end{proof}
We are ready to complete the proof of our main theorems, Theorem \ref{main1} and Theorem \ref{main2}, which will be proved in the following subsections.
\subsection{Proof of Theorem \ref{main1}}
By duality and Lemma \ref{lem3.2}, it is enough to prove the following statement:
\begin{theorem}\label{main1-1}If the dimension $d\ge 6$ is even, then we have
$$ \|\widehat{g}\|_{L^2(P, d\sigma)} \lesssim \|g\|_{L^{\frac{6d+8}{3d+10}}(\mathbb F_q^d, dm)}$$
for every regular function $g$ supported on $G \subset (\mathbb F_q^d, dm).$
\end{theorem}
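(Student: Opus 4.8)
The plan is to prove the equivalent dual estimate $\|\widehat g\|_{L^2(P,d\sigma)}\lesssim |G|^{\frac{3d+10}{6d+8}}$ for every regular $g$ with support $G$; since $\tfrac12\le|g|\le 1$ on $G$ one has $\|g\|_{L^r(\mathbb F_q^d,dm)}\sim|G|^{1/r}$, so this is exactly the asserted inequality with $r=\frac{6d+8}{3d+10}$. It is convenient to record that $\frac{3d+10}{6d+8}=\tfrac12+\frac{3}{3d+4}$, and that by regularity all level sets $G_a$ ($a\in L_G$) are of comparable size, so $|G_a|\sim|G|/|L_G|$ with $|L_G|\le q$. The argument is a trichotomy in the common value of $|G_a|$.

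In the small regime $|G_a|\lesssim q^{\frac{d-2}{2}}$ one has $|G|\sim|G_a||L_G|\lesssim q^{d/2}$, and Lemma \ref{lem3.5} gives $\|\widehat g\|_{L^2}\lesssim|G|^{1/2}+q^{\frac{-d+1}{4}}|G|$; feeding $|G|\lesssim q^{d/2}$ into the second term, the inequality $q^{\frac{-d+1}{4}}|G|\lesssim|G|^{1/2+\frac{3}{3d+4}}$ reduces to $6d\ge 8$, which holds. In the large regime $|G_a|\gtrsim q^{\frac{d+2}{2}}$ one has $|G|\gtrsim q^{\frac{d+2}{2}}\ge q^{\frac{3d+4}{6}}$, so the trivial Plancherel bound $\|\widehat g\|_{L^2}\le q^{1/2}|G|^{1/2}$ of Lemma \ref{lem3.3} is already at most $|G|^{1/2+\frac{3}{3d+4}}$. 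Everything of substance therefore lives in the middle range $q^{\frac{d-2}{2}}\lesssim|G_a|\lesssim q^{\frac{d+2}{2}}$, which is precisely the range in which the even-dimensional additive-energy estimate of Corollary \ref{cor1}(1) is valid.

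There Lemma \ref{lem4.3}(1) (obtained by feeding Corollary \ref{cor1}(1) into Lemma \ref{key1}(1)) gives $\|\widehat g\|_{L^2}\lesssim |G|^{1/2}+|G|^{11/16}|L_G|^{3/16}q^{\frac{-3d+6}{32}}$, and I would interpolate its second term against the Plancherel bound $q^{1/2}|G|^{1/2}$: taking the geometric mean with weight $\theta$ on the former and $1-\theta$ on the latter, and then absorbing the extra factor $|L_G|$ via $|L_G|\le q$, one arrives at a bound whose $|G|$-exponent is $\tfrac12+\tfrac{3\theta}{16}$ and whose $q$-exponent is $\frac{16-\theta(3d+4)}{32}$. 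The choice $\theta=\frac{16}{3d+4}\in(0,1]$ (valid for $d\ge 4$) simultaneously kills the power of $q$ and makes the power of $|G|$ exactly $\tfrac12+\tfrac{3}{3d+4}$, so $\|\widehat g\|_{L^2}\lesssim|G|^{1/2+\frac{3}{3d+4}}$ on the whole range. This finishes Theorem \ref{main1-1}, and hence, via Lemma \ref{lem3.2} and duality, Theorem \ref{main1}.

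The two outer regimes are routine bookkeeping; the crux is the middle one. The step I expect to be the real point is the realization that the sharp play here is additive energy against Plancherel, and that after bounding the surplus $|L_G|$ by $q$ the single interpolation parameter $\theta$ can be tuned to land both exponents on the target at once. The hypotheses enter transparently: evenness of $d$ is needed for Corollary \ref{cor1}(1), hence for Lemma \ref{lem4.3}(1); and $d\ge 6$ is exactly what makes the resulting exponent $\frac{6d+8}{3d-2}$ strictly below the previously known $\frac{2d^2}{d^2-2d+2}$ of Lemma \ref{LLL} — at $d=4$ they coincide, $\theta=1$, and the interpolation degenerates to Lemma \ref{lem4.3}(1) alone.
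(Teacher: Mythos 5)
Your proof is correct, and it rests on exactly the same ingredients as the paper's (Lemma \ref{lem3.5} for small supports, Lemma \ref{lem3.3} for large ones, and Lemma \ref{lem4.3}(1) --- i.e.\ Corollary \ref{cor1}(1) fed into Lemma \ref{key1}(1) --- in between), but the case analysis is organized differently. The paper splits according to $|G|$, pre-selecting the thresholds $q^{d/2}$ and $q^{(3d+4)/6}$ (equivalently $\delta=\frac{d}{2d-2}$, $\varepsilon=\frac{d-4}{6d-6}$) so that throughout the middle range the slice condition $q^{(d-2)/2}\lesssim|G_a|\lesssim q^{(d+2)/2}$ holds for every admissible $|L_G|=q^{\alpha}$, and then closes each case by comparing exponents at the endpoints of the range. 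You instead trichotomize on the common slice size $|G_a|$, which is the natural variable for Corollary \ref{cor1}, and in the middle regime take a weighted geometric mean of the energy bound with Plancherel, tuning $\theta=\frac{16}{3d+4}$ to annihilate the power of $q$. These are dual descriptions of the same optimization --- your $\theta$ is precisely what determines the paper's crossover point $q^{(3d+4)/6}$ --- but your version makes it transparent where the exponent $\frac{6d+8}{3d-2}$ comes from and why $d=4$ is degenerate. One small step to write out explicitly: interpolating ``the second term only'' is justified because from $\|\widehat g\|_{L^2}\lesssim|G|^{1/2}+A$ and $\|\widehat g\|_{L^2}\le B$ one gets $\|\widehat g\|_{L^2}\lesssim|G|^{1/2}+\min(A,B)\le|G|^{1/2}+A^{\theta}B^{1-\theta}$; taking the geometric mean of the full right-hand sides would instead produce a spurious cross term $|G|^{1/2}q^{(1-\theta)/2}$.
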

\begin{proof} As mentioned in the beginning of this section, it is helpful to work on  three kinds of  regular functions $g$ classified according to the following size of $G=\mbox{supp}(g):$ for some $\varepsilon, \delta>0,$
$$ (1)~~  1\le |G| \le q^{\frac{d^2}{2d-2}-\delta} \quad  (2)~~ q^{\frac{d^2}{2d-2}-\delta} \le |G| \le q^{\frac{d^2}{2d-2}+\varepsilon} \quad (3)~~ q^{\frac{d^2}{2d-2}+\varepsilon}\le |G| \le q^d.$$
Notice that Lemma \ref{lem3.2} yields much strong restriction inequality whenever $|G|$ becomes lager. Thus, Lemma \ref{lem3.2} is useful for the case (3). Also observe that Lemma \ref{lem3.5} gives the better restriction inequality for  smaller size of $G$ and so it is helpful for the case (1). Thus, choosing big $\varepsilon$ and $\delta$ will yield  good results for both the case (1) and the case (3). However, whenever $\varepsilon$ and $\delta$ become larger,  the restriction estimate will be worse for the case (2).  Hence, to deduce desirable results for all cases, our main task is to select optimal values of $\varepsilon$ and $\delta.$
Now, let us see how to find the optimal $\varepsilon$ and $\delta.$ Let $\varepsilon, \delta>0$ which will be chosen later. Let $g$ be a regular function with its support $G$ such that
\begin{equation} \label{size1} q^{\frac{d^2}{2d-2}-\delta} \le |G| \le  q^{\frac{d^2}{2d-2}+\varepsilon}.\end{equation}
Let $|L_G|=q^\alpha$ for $0\le \alpha \le 1.$ Since $|G|\sim |G_a||L_G|=|G_a| q^\alpha$ for $a\in L_G,$  it must follow that for every $a\in L_G,$
$$ q^{\frac{d^2}{2d-2}-\delta-\alpha} \lesssim |G_a| \lesssim  q^{\frac{d^2}{2d-2}+\varepsilon-\alpha}.$$
In order to use the first part of Lemma \ref{lem4.3}, we need to choose $\varepsilon, \delta>0$ such that
$$q^{\frac{d-2}{2}}\le  q^{\frac{d^2}{2d-2}-\delta-\alpha} \lesssim |G_a| \lesssim  q^{\frac{d^2}{2d-2}+\varepsilon-\alpha} \le q^{\frac{d+2}{2}}.$$
Thus, if we select $\varepsilon, \delta>0$ satisfying that
\begin{equation}\label{conditione-d} \delta+\alpha \le \frac{3d-2}{2d-2} \quad \mbox{and}\quad  \varepsilon-\alpha\le \frac{d-2}{2d-2},\end{equation}
then the first part of Lemma \ref{lem4.3} yields
\begin{equation}\label{middle1} \|\widehat{g}\|_{L^2(P, d\sigma)} \lesssim |G|^{\frac{1}{2}} + |G|^{\frac{11}{16}}\,  q^{\frac{-3d+12}{32}} \quad \mbox{for}~~  q^{\frac{d^2}{2d-2}-\delta} \le |G| \le  q^{\frac{d^2}{2d-2}+\varepsilon},\end{equation}
where we use the fact that $|L_G|\le q.$ Notice that this inequality gives  worse restriction results whenever $|G|$ becomes lager. Thus, comparing this inequality with Lemma \ref{lem3.3} which gives better restriction inequality for big size of $G,$ it is desirable to choose a possibly large $\varepsilon>0$ such that
$$ |G|^{\frac{1}{2}} + |G|^{\frac{11}{16}}\,  q^{\frac{-3d+12}{32}}  \lesssim |G|^{\frac{1}{2}} q^{\frac{1}{2}}\, \left(\mbox{namely,}~ |G|\lesssim q^{\frac{3d+4}{6}}\right) \quad \mbox{and}\quad   |G| \le  q^{\frac{d^2}{2d-2}+\varepsilon}.$$
For this reason, we take $\varepsilon =\frac{d-4}{6d-6}$ which is positive for even $d\ge 6.$ Then we can take $\delta=\frac{d}{2d-2}$ so that the inequality  \eqref{conditione-d}  holds for all $0\le \alpha \le 1.$
Now we start proving Theorem \ref{main1-1}.\\

\noindent {\bf (Case I)} Assume that $q^{\frac{d}{2}} \le |G| \le q^{\frac{3d+4}{6}},$ which is the case in \eqref{size1} for $\varepsilon =\frac{d-4}{6d-6}$ and  $\delta=\frac{d}{2d-2}.$
Then, by \eqref{middle1}, we see that
$$ \|\widehat{g}\|_{L^2(P, d\sigma)} \lesssim |G|^{\frac{1}{2}} + |G|^{\frac{11}{16}}\,  q^{\frac{-3d+12}{32}} \quad \mbox{for}~~  q^{\frac{d}{2}} \le |G| \le q^{\frac{3d+4}{6}}.$$
By the direct comparison, it follows that for all $ q^{\frac{d}{2}} \le |G| \le q^{\frac{3d+4}{6}},$
$$ |G|^{\frac{1}{2}} + |G|^{\frac{11}{16}}\,  q^{\frac{-3d+12}{32}} \lesssim |G|^{\frac{3d+10}{6d+8}}=\|G\|_{L^{\frac{6d+8}{3d+10}}(\mathbb F_q^d, dm)}  \sim \|g\|_{L^{\frac{6d+8}{3d+10}}(\mathbb F_q^d, dm)}.$$
Thus, the statement of Theorem \ref{main1-1} is valid for all regular functions $g$ on $(\mathbb F_q^d, dm)$ such that $q^{\frac{d}{2}} \le |\mbox{supp}(g)|=|G| \le q^{\frac{3d+4}{6}}.$\\

\noindent {\bf (Case II)} Assume that $1 \le |G| \le q^{\frac{d}{2}}.$
Applying Lemma \ref{lem3.5}, we obtain that
$$ \|\widehat{g}\|_{L^2(P, d\sigma)} \lesssim |G|^{\frac{1}{2}} + q^{\frac{-d+1}{4}} |G| \quad\mbox{for all} ~~1 \le |G| \le q^{\frac{d}{2}}.$$
In fact, this inequality gives much stronger restriction estimate than Theorem \ref{main1-1} for $1 \le |G| \le q^{\frac{d}{2}}.$
By the direct comparison, if $1\le |G| \le q^{\frac{d}{2}},$ then we have
$$ |G|^{\frac{1}{2}} + q^{\frac{-d+1}{4}} |G| \lesssim |G|^{\frac{d+1}{2d}} = \|G\|_{L^{\frac{2d}{d+1}}(\mathbb F_q^d, dm)} \le \|G\|_{L^{\frac{6d+8}{3d+10}}(\mathbb F_q^d, dm)}  \sim \|g\|_{L^{\frac{6d+8}{3d+10}}(\mathbb F_q^d, dm)}.$$
Hence, Theorem \ref{main1-1} is proved in this case.\\

\noindent {\bf (Case III)} Finally, assume that  $ q^{\frac{3d+4}{6}}\le |G| \le q^d.$
In this case, by Lemma \ref{lem3.3} and the direct comparison, the statement of Theorem \ref{main1-1} holds:  for all $q^{\frac{3d+4}{6}}\le |G| \le q^d,$
$$\|\widehat{g}\|_{L^2(P, d\sigma)} \lesssim |G|^{\frac{1}{2}} q^{\frac{1}{2}} \lesssim \|G\|_{L^{\frac{6d+8}{3d+10}}(\mathbb F_q^d, dm)}  \sim \|g\|_{L^{\frac{6d+8}{3d+10}}(\mathbb F_q^d, dm)}.$$
We has completed the proof.
\end{proof}

\subsection{Proof of Theorem \ref{main2}}
Theorem \ref{main2} can be proved by following the same arguments as in the proof of Theorem \ref{main1}
but we will need additional work to deal with a regular set $G$ with middle size. The second part of Lemma \ref{key1} will make a crucial role in overcoming the problem. Now we start proving Theorem \ref{main2}.
By duality and Lemma \ref{lem3.2}, it suffices to prove the following statement:
\begin{theorem}\label{main2-2}
If $d=4\ell+3$ for $\ell\in \mathbb N$, and $ -1\in \mathbb F_q$ is not a square number, then we have
$$ \|\widehat{g}\|_{L^2(P, d\sigma)} \lesssim \|g\|_{L^{\frac{6d+10}{3d+11}}(\mathbb F_q^d, dm)}$$
for every regular function $g$ supported on $G \subset (\mathbb F_q^d, dm).$
\end{theorem}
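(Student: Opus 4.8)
The plan is to mimic the three-case decomposition used in the proof of Theorem \ref{main1-1}, but now calibrating the cut-off exponents to the odd-dimensional estimates available for $d=4\ell+3$. Recall that by duality and Lemma \ref{lem3.2} it suffices to prove the displayed $L^{\frac{6d+10}{3d+11}}\to L^2$ restriction bound for regular functions $g$ supported on $G$. The idea is to split into the small range $1\le |G|\le q^{\beta_1}$, a middle range $q^{\beta_1}\le |G|\le q^{\beta_2}$, and a large range $q^{\beta_2}\le |G|\le q^d$, for exponents $\beta_1,\beta_2$ to be chosen so that the three inputs fit together.

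First I would handle the large range with Lemma \ref{lem3.3}, which gives $\|\widehat{g}\|_{L^2(P,d\sigma)}\lesssim q^{1/2}|G|^{1/2}$; this beats the target exponent $|G|^{\frac{3d+11}{6d+10}}$ precisely when $|G|$ is above some threshold, which will dictate one natural choice of $\beta_2$. Next I would handle the small range with Lemma \ref{lem3.5}, which gives $\|\widehat{g}\|_{L^2(P,d\sigma)}\lesssim |G|^{1/2}+q^{\frac{-d+1}{4}}|G|$; this dominates the target when $|G|$ is below a threshold, fixing a candidate $\beta_1$. The delicate part is the middle range. Here I would want to invoke the second part of Lemma \ref{lem4.3}, which under the constraint $q^{\frac{d-2}{2}}\lesssim |G_a|\lesssim q^{\frac{d+1}{2}}$ for all $a\in L_G$ gives
$$\|\widehat{g}\|_{L^2(P,d\sigma)}\lesssim |G|^{\frac{1}{2}}+|G|^{\frac{11}{16}}|L_G|^{\frac{3}{16}}q^{\frac{-3d+5}{32}}+|G|^{\frac{5}{8}}|L_G|^{\frac{1}{4}}q^{\frac{-d+2}{16}},$$
and after bounding $|L_G|\le q$ I would compare each of the three terms against $|G|^{\frac{3d+11}{6d+10}}$ over the chosen middle range. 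Because the $|G_a|$-window here is narrower on the upper side ($q^{(d+1)/2}$ rather than $q^{(d+2)/2}$), the admissible $\varepsilon,\delta$ must satisfy tighter constraints of the shape $\delta+\alpha\le \frac{?}{2d-2}$ and $\varepsilon-\alpha\le \frac{?}{2d-2}$ where $\alpha$ is defined by $|L_G|=q^\alpha$, exactly as in \eqref{conditione-d}; optimizing so that the $|G|$-endpoints of the middle range match those of Cases I and III is the real bookkeeping step. The key sanity check is that at the common endpoint the middle-range bound and the $q^{1/2}|G|^{1/2}$ bound agree, and at the other endpoint the middle-range bound and the $|G|$-from-Lemma \ref{lem3.5} bound agree.

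Once $\beta_1$ and $\beta_2$ are pinned down so that the three ranges cover $[1,q^d]$ and in each range the relevant estimate is $\lesssim |G|^{\frac{3d+11}{6d+10}}=\|g\|_{L^{\frac{6d+10}{3d+11}}(\mathbb F_q^d,dm)}$, the proof assembles routinely. I expect the main obstacle to be verifying, in the middle range, that the extra third term $|G|^{5/8}|L_G|^{1/4}q^{\frac{-d+2}{16}}$ (which has no analogue in the even-dimensional Theorem \ref{main1-1}) does not destroy the target bound; this term comes from the $q^{\frac{d-2}{2}}|E|^2$ piece of the additive-energy estimate in the second part of Corollary \ref{cor1}, and one must check it is genuinely dominated on the relevant window of $|G|$, possibly at the cost of a slightly smaller gain than in the even case — which is consistent with the exponent $\frac{6d+10}{3d-1}$ in Theorem \ref{main2} being larger than the even-dimensional $\frac{6d+8}{3d-2}$ of Theorem \ref{main1}. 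If that term turns out to be binding at one endpoint, the fix is to shrink $\beta_2$ slightly and absorb the leftover sliver into the large range via Lemma \ref{lem3.3}, since that estimate only improves as $|G|$ grows.
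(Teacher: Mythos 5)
Your overall architecture (duality plus Lemma \ref{lem3.2}, then a three-range split in $|G|$ handled by Lemma \ref{lem3.5}, Lemma \ref{lem4.3}(2), and Lemma \ref{lem3.3}) matches the paper's, and the endpoint calibration you describe is the right one ($q^{d/2}$ and $q^{\frac{3d+5}{6}}$). But there is a genuine gap in your treatment of the middle range, and it is not the one you flagged: the extra term $|G|^{5/8}|L_G|^{1/4}q^{\frac{-d+2}{16}}$ is harmless. The real problem is the \emph{applicability condition} of Lemma \ref{lem4.3}(2). Writing $|L_G|=q^{\beta}$, the requirement $|G_a|\lesssim q^{\frac{d+1}{2}}$ for $|G|$ ranging up to $q^{\frac{d^2}{2d-2}+\varepsilon}$ translates into $\beta-\varepsilon\ge\frac{1}{2d-2}$ (this is \eqref{conditione-d1}), i.e.\ a strictly positive \emph{lower} bound on $|L_G|$. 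This is qualitatively different from the even case, where the analogous constraint $\varepsilon-\alpha\le\frac{d-2}{2d-2}$ holds for every $\alpha\ge 0$. With the choice $\varepsilon=\frac{2d-5}{6d-6}$ one needs $|L_G|\ge q^{1/3}$; a regular set $G$ of middle size concentrated on few slices, each slice larger than $q^{\frac{d+1}{2}}$, falls outside the range of Corollary \ref{cor1}(2) (there the additive energy is no longer dominated by $q^{\frac{d-3}{4}}|E|^{5/2}+q^{\frac{d-2}{2}}|E|^2$), so Lemma \ref{lem4.3}(2) simply does not apply. Your proposed rescue --- shrink $\beta_2$ and absorb the sliver into the large range --- cannot work: Lemma \ref{lem3.3} beats the target $|G|^{\frac{3d+11}{6d+10}}$ only for $|G|\ge q^{\frac{3d+5}{6}}$, while the problematic configurations with small $|L_G|$ occur for all $|G|$ down to $q^{\frac{d+1}{2}}$, strictly below that threshold.

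The paper closes this sub-case with an ingredient your outline omits entirely: the second part of Lemma \ref{key1}, obtained by running \eqref{L2formula} with $r=\frac{2d^2}{d^2-2d+2}$ and the Lewko--Lewko extension theorem (Lemma \ref{LLL}), which gives $\|\widehat{g}\|_{L^2(P,d\sigma)}\lesssim |G|^{\frac{d^2+d-1}{2d^2}}|L_G|^{1/4}$ with no constraint on the slice sizes. For $1\le|L_G|\le q^{1/3}$ and $q^{d/2}\le|G|\le q^{\frac{3d+5}{6}}$ this yields $\lesssim |G|^{\frac{d^2+d-1}{2d^2}}q^{1/12}\lesssim|G|^{\frac{3d^2+4d-3}{6d^2}}\le|G|^{\frac{3d+11}{6d+10}}$, completing the middle range. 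Without this (or some substitute valid for slices of size exceeding $q^{\frac{d+1}{2}}$), your argument does not close.
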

\begin{proof} As in the proof of Theorem \ref{main1-1}, let $g$ be a regular function supported on the set $G\subset (\mathbb F_q^d, dm)$ satisfying that
\begin{equation} \label{size2} q^{\frac{d^2}{2d-2}-\delta} \le |G| \le  q^{\frac{d^2}{2d-2}+\varepsilon}\end{equation}
for some $\varepsilon, \delta>0$ which shall be selected as constants.
Let $|L_G|=q^\beta$ for $0\le \beta \le 1.$ Since $|G|\sim |G_a||L_G|=|G_a| q^\beta$ for $a\in L_G,$ it follows that for every $a\in L_G,$
$$ q^{\frac{d^2}{2d-2}-\delta-\beta} \lesssim |G_a| \lesssim  q^{\frac{d^2}{2d-2}+\varepsilon-\beta}.$$
For such $\varepsilon, \delta>0$, assume that for every $a\in L_G,$
$$q^{\frac{d-2}{2}}\le  q^{\frac{d^2}{2d-2}-\delta-\beta} \lesssim |G_a| \lesssim  q^{\frac{d^2}{2d-2}+\varepsilon-\beta} \le q^{\frac{d+1}{2}}.$$
Namely, we assume that
\begin{equation}\label{conditione-d1} \delta+\beta \le \frac{3d-2}{2d-2} \quad \mbox{and}\quad \frac{1}{2d-2}\le \beta -\varepsilon.\end{equation}
Then using the second part of Lemma \ref{lem4.3}, we have
 \begin{align}\label{L2good} \nonumber \|\widehat{g}\|_{L^2(P, d\sigma)} &\lesssim |G|^{\frac{1}{2}} +
 |G|^{\frac{11}{16}} |L_G|^{\frac{3}{16}} q^{\frac{-3d+5}{32}}+|G|^{\frac{5}{8}} |L_G|^{\frac{1}{4}} q^{\frac{-d+2}{16}} \\
 &\le |G|^{\frac{1}{2}} +|G|^{\frac{11}{16}} q^{\frac{-3d+11}{32}} + |G|^{\frac{5}{8}}q^{\frac{-d+6}{16}},
 \end{align}
 where we utilized the fact that $|L_G|\le q.$ As before, by comparing this estimate with Lemma \ref{lem3.3}, we select the $\varepsilon >0$ such that $|G|\le q^{\frac{3d+5}{6}}=  q^{\frac{d^2}{2d-2}+\varepsilon}.$
 Namely, we take $\varepsilon=\frac{2d-5}{6d-6}.$ With this $\varepsilon$,  if we choose $\frac{1}{3} \le \beta \le 1$ and $ \delta=\frac{d}{2d-2},$ then all conditions in \eqref{conditione-d1} hold, because $1\le |L_G|=q^\beta \le q.$
 \begin{remark}\label{rem1} In conclusion, we have seen that if $g$ is a regular function with its support $G\subset (\mathbb F_q^d, dm)$ such that  $q^{\frac{d^2}{2d-2}-\delta} \le |G| \le  q^{\frac{d^2}{2d-2}+\varepsilon}$
 and $ q^{\frac{1}{3}} \le |L_G|\le q$ for $\varepsilon=\frac{2d-5}{6d-6}$ and $\delta=\frac{d}{2d-2},$ then the inequality \eqref{L2good} holds. \end{remark}

 Now, we are ready to give the complete proof of Theorem \ref{main2-2}. \\

 \noindent{(\bf Case 1)} Assume that  $q^{\frac{d}{2}} \le |G| \le  q^{\frac{3d+5}{6}}$ which is the case in \eqref{size2} for $\varepsilon=\frac{2d-5}{6d-6}$ and $\delta=\frac{d}{2d-2}.$
 In addition, assume that $q^{\frac{1}{3}} \le |L_G|\le q.$ Then,  by Remark \ref{rem1} and the direct comparison, we see that if $q^{\frac{d}{2}} \le |G| \le  q^{\frac{3d+5}{6}}$ and $q^{\frac{1}{3}} \le |L_G|\le q,$ then
 for $d\ge 7,$
 \begin{align*}  \|\widehat{g}\|_{L^2(P, d\sigma)} &\lesssim |G|^{\frac{1}{2}} +|G|^{\frac{11}{16}} q^{\frac{-3d+11}{32}} + |G|^{\frac{5}{8}}q^{\frac{-d+6}{16}}\\
                                                   &\lesssim |G|^{\frac{3d+11}{6d+10}} =\|G\|_{L^{\frac{6d+10}{3d+11}}(\mathbb F_q^d, dm)} \sim \|g\|_{L^{\frac{6d+10}{3d+11}}(\mathbb F_q^d, dm)}. \end{align*}
 On the other hand, if $ 1\le |L_G|\le q^{\frac{1}{3}}$ and $ q^{\frac{d}{2}} \le |G| \le  q^{\frac{3d+5}{6}},$ then we see from the second part of Lemma \ref{key1} and the direct comparison that
 \begin{align*}  \|\widehat{g}\|_{L^2(P, d\sigma)} &\lesssim |G|^{\frac{d^2+d-1}{2d^2}}|L_G|^{\frac{1}{4}} \le |G|^{\frac{d^2+d-1}{2d^2}} q^{\frac{1}{12}} \lesssim |G|^{\frac{3d^2+4d-3}{6d^2}}\\
                                                    &= \|G\|_{L^{\frac{6d^2}{3d^2+4d-3}}(\mathbb F_q^d, dm)} \le \|G\|_{L^{\frac{6d+10}{3d+11}}(\mathbb F_q^d, dm)}
                                                    \sim \|g\|_{L^{\frac{6d+10}{3d+11}}(\mathbb F_q^d, dm)}.\end{align*}

Thus, Theorem \ref{main2-2} holds for all $q^{\frac{d}{2}} \le |G| \le  q^{\frac{3d+5}{6}}.$\\

\noindent{(\bf Case 2)} Assume that $1 \le |G| \le q^{\frac{d}{2}}.$ In this case, Theorem \ref{main2-2} can be proved by using Lemma \ref{lem3.5} and the direct comparison as follows:
$$\|\widehat{g}\|_{L^2(P, d\sigma)} \lesssim |G|^{\frac{1}{2}} + q^{\frac{-d+1}{4}} |G|\lesssim |G|^{\frac{3d+11}{6d+10}}=\|G\|_{L^{\frac{6d+10}{3d+11}}(\mathbb F_q^d, dm)}
                                                    \sim \|g\|_{L^{\frac{6d+10}{3d+11}}(\mathbb F_q^d, dm)}.$$

\noindent{(\bf Case 3)}  Assume that  $ q^{\frac{3d+5}{6}}\le |G| \le  q^{d}.$ In this case, the statement of Theorem \ref{main2-2} holds by Lemma \ref{lem3.3} and the direct comparison as follows:
$$\|\widehat{g}\|_{L^2(P,d\sigma)} \le q^{\frac{1}{2}} |G|^{\frac{1}{2}} \lesssim |G|^{\frac{3d+11}{6d+10}}
=\|G\|_{L^{\frac{6d+10}{3d+11}}(\mathbb F_q^d, dm)} \sim \|g\|_{L^{\frac{6d+10}{3d+11}}(\mathbb F_q^d, dm)}.$$
By Cases $1,2,$ and $3,$  the proof of Theorem \ref{main2-2} is complete.
\end{proof}


\begin{table}[ht]
\caption{Progress on the finite field extension problem for paraboloids in lower dimensions}
\begin{center}
\begin{tabular}{|c|c|c|}
    \hline
    Dimension $d,$  &  & \\
    Field $\mathbb F_q$ &  $R^*_P(p\to r)\lesssim 1$      & Authors \\
    \hline
     $d=2,~$ general $q$ & $p=2,~r=4 ~\mbox{(S-T)}$   &Mockenhaupt and Tao \cite{MT04} ~(solution)\\
    \hline
   $d=3,$  & $p=2,~r=4 ~\mbox{(S-T)}$  &Mockenhaupt and Tao \cite{MT04}~(sharp)\\
    $-1$ a square& $p=2.25, ~r=3.6$   &M. Lewko \cite{Le14}~(sharp)\\
                   &$p=\frac{18-5\varepsilon}{8-5\varepsilon},~r=3.6-\varepsilon$ &M. Lewko \cite{Le14}~(sharp)\\
                   & for some $\varepsilon>0$ & \\
              &    $p=3,~r=3$& (conjectured)\\
    \hline
    $d=3,$& $p=2,~r>3.6$  & Mockenhaupt and Tao \cite{MT04}\\
   $-1$ not a square  &  $p>1.6,~r=4$  & Mockenhaupt and Tao \cite{MT04}\\
     (prime $q$)& $p=2,~r=3.6$  & A. Lewko and M. Lewko \cite{LL10}\\
     & $p=1.6,~r=4$  & A. Lewko and M. Lewko \cite{LL10}(sharp)\\
& $p=2,~r>3.6-\frac{1}{1035}$  & M. Lewko \cite{LL13}  \\
 & $p=2,~r=3 $ &(conjectured)\\
   \hline
    $d=3,$& $p=2,~r=3.6-\varepsilon$  & M. Lewko \cite{LL13} \\
   $-1$ not a square  &  for some $\varepsilon>0$  & \\
    &   $p=2,~r=3 $ &(conjectured)\\
   \hline

\end{tabular}
\end{center}
\label{tab:multicol}
\end{table}

\begin{table}[ht]
\caption{Progress on the finite field extension problem for paraboloids in higher dimensions}
\begin{center}
\begin{tabular}{|c|c|c|}
     \hline
    Dimension $d,$  &  & \\
    Field $\mathbb F_q$ &  $R^*_P(p\to r)\lesssim 1$      & Authors \\
   \hline
        $d\ge 4$ even, & $p=2,~r=\frac{2d+2}{d-1}$~(S-T) & Mockenhaupt and Tao \cite{MT04}\\
       general $q$   & $p=2,~r>\frac{2d^2}{d^2-2d+2}$ & Iosevich and Koh \cite{IK09}\\
           & $p> \frac{4d}{3d-2},~  r=4$ & Iosevich and Koh \cite{IK09}\\
           & $p=2,~r=\frac{2d^2}{d^2-2d+2}$ & A. Lewko and M. Lewko \cite{LL10}\\
           & $p=\frac{4d}{3d-2},~  r=4$ & A. Lewko and M. Lewko \cite{LL10}~(sharp)\\
           & $p=2,~r>\frac{6d+8}{3d-2}$  & Theorem 1.4\\
           & $p= \frac{2d^2}{d^2-d+2},~r=\frac{2d}{d-1}$  & (conjectured) \\
           & $p=2,~r=\frac{2d+4}{d}$ & (conjectured best $r$ for $p=2$)\\
   \hline
   $d\ge 5$ odd, & $p=2,~r=\frac{2d+2}{d-1}$~(S-T) & Mockenhaupt and Tao \cite{MT04}~(sharp)\\
     $-1$ a square   & $p=\frac{2d+2}{d-1},~r=\frac{2d+2}{d-1}-\varepsilon_d$ & M. Lewko \cite{Le14}\\
       & for some $\varepsilon_d>0$ & \\
       & $p=\frac{2d}{d-1},~r=\frac{2d}{d-1}$& (conjectured)\\
     \hline

        $d=4\ell+1$ for $\ell\in \mathbb N,$ & $p=2,~r=\frac{2d+2}{d-1}$~(S-T) & Mockenhaupt and Tao \cite{MT04}~(sharp)\\
              $-1$ not a square     & &  \\
       & $p=\frac{2d}{d-1},~r=\frac{2d}{d-1}$& (conjectured)\\
     \hline

   $d=4\ell+3$ for $\ell\in \mathbb N,$ & $p=2,~r=\frac{2d+2}{d-1}$~(S-T) & Mockenhaupt and Tao \cite{MT04}\\
              $-1$ not a square & $p=2,~r>\frac{2d^2}{d^2-2d+2}$ & Iosevich and Koh \cite{IK09}\\
       & $p> \frac{4d}{3d-2},~  r=4$ & Iosevich and Koh \cite{IK09}\\
       & $p=2,~r=\frac{2d^2}{d^2-2d+2}$ & A. Lewko and M. Lewko \cite{LL10}\\
           & $p=\frac{4d}{3d-2},~  r=4$ & A. Lewko and M. Lewko \cite{LL10}\\
           & $p=2,~r>\frac{6d+10}{3d-1}$  & Theorem 1.5\\
            & $p=\frac{2d^2+2d }{d^2+3},~ r=\frac{2d}{d-1}$ & (conjectured)\\
            & $p=2,~r=\frac{2d+6}{d+1}$ & (conjectured best $r$ for $p=2$)\\
     \hline

\end{tabular}
\end{center}
\label{tab:multicol}
\end{table}

\newpage

\bibliographystyle{amsplain}

\end{document}